\title[On orbifold Riemannian metrics]{On real analytic orbifolds and  Riemannian metrics}
\author{Marja Kankaanrinta} 
\address{Department of Mathematics\\University of Virginia \\\newline
         Charlottesville, VA 22903\\USA}
\email{mk5aq@virginia.edu}
\newtheorem{theorem}{Theorem}[section]   
\newtheorem{lemma}[theorem]{Lemma}
\newtheorem{prop}[theorem]{Proposition}         
 \newtheorem{cor}[theorem]{Corollary}
\theoremstyle{definition}
\newtheorem{definition}[theorem]{Definition}   
\newtheorem*{remark}{Remark}
\numberwithin{equation}{section}
\begin{document}

%
%
%
%
%
%
%
%
\begin{abstract}  
We begin by showing  that every real analytic orbifold has a real analytic
Riemannian metric. It follows that every reduced real analytic orbifold
can be expressed as a quotient of a real analytic manifold by a real
analytic almost free action of a compact Lie group. We then extend a
well-known result of Nomizu and Ozeki concerning
 Riemannian metrics on manifolds
to the orbifold setting:  Let $X$ be a smooth (real analytic) orbifold and let
$\alpha$ be a smooth (real analytic) Riemannian metric on $X$. Then $X$
has a complete smooth (real analytic) Riemannian metric conformal to
$\alpha$.
\end{abstract}

\maketitle
\section{Introduction}
\label{intro}
\noindent    In this paper we consider Riemannian metrics
on smooth, i.e., ${\rm C}^\infty$, and real analytic orbifolds.  As is well-known,
a smooth Riemannian metric for any smooth orbifold can be
constructed by using invariant Riemannian metrics on orbifold charts and
gluing them together by a smooth partition of unity. Real analytic manifolds
admit real analytic Riemannian metrics, since they can be real analytically
embedded in Euclidean spaces. Neither of these two methods 
to construct Riemannian metrics work for real
analytic orbifolds and a different approach is needed.

Recall that an orbifold is called {\it reduced} if the actions of the finite groups
on  orbifold charts are effective.
We first study the frame bundle ${\rm Fr}(X)$ of a reduced
$n$-dimensional real analytic  
orbifold $X$.  The frame bundle ${\rm Fr}(X)$ is a real analytic manifold
and the general linear group ${\rm GL}_n({\mathbb{R}})$ acts properly 
and almost freely, i.e., with finite isotropy subgroups, on
${\rm Fr}(X)$. Thus ${\rm Fr}(X)$ has a ${\rm GL}_n({\mathbb{R}})$-invariant
real analytic Riemannian metric
(\cite{IK}, Theorem I), which induces a real analytic Riemannian
metric on $X$. We then show that every real analytic orbifold inherits a
real analytic Riemannian metric from  the corresponding reduced orbifold.
Therefore we obtain:

\begin{theorem}
\label{realanalcase}
Let $X$ be a real analytic orbifold. Then $X$ has a real analytic Riemannian
metric.
\end{theorem}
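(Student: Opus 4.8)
The plan is to reduce the general (possibly non-reduced) case to the reduced case, since the excerpt has already explained how to put a real analytic Riemannian metric on a reduced orbifold via the frame bundle construction. So my first step is to recall that every real analytic orbifold $X$ has an associated reduced real analytic orbifold $X_{\mathrm{red}}$, obtained chart-by-chart by quotienting out the kernel of each finite group action, i.e. replacing each group by its image in the diffeomorphism group of the chart. The underlying topological spaces of $X$ and $X_{\mathrm{red}}$ coincide, and the local models differ only by the ineffective part of the group action. The key point is that the charts of $X$ and $X_{\mathrm{red}}$ share the same local Euclidean coordinates, so a Riemannian metric on one can be transported to the other.

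Next I would invoke the reduced case: by the frame bundle argument sketched in the introduction, $X_{\mathrm{red}}$ carries a real analytic Riemannian metric $\beta$. The heart of the proof is then to show that $\beta$ pulls back (or lifts) to a real analytic Riemannian metric on $X$. Concretely, on each orbifold chart $(\tilde U, G, \varphi)$ of $X$ with corresponding reduced chart $(\tilde U, G/K, \varphi)$ (where $K$ is the kernel of the $G$-action), the metric $\beta$ is represented by a $G/K$-invariant real analytic metric on $\tilde U$. Since the $G$-action factors through $G/K$, this same metric on $\tilde U$ is automatically $G$-invariant, hence defines a real analytic Riemannian metric on the chart of $X$. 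The compatibility of these local metrics under change of charts follows because they are literally the same tensors used for $X_{\mathrm{red}}$, whose compatibility we already have.

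The step I expect to be the main obstacle is making precise, in the orbifold-atlas language, the assertion that a metric on $X_{\mathrm{red}}$ is canonically a metric on $X$. This requires checking that embeddings of charts for $X$ and for $X_{\mathrm{red}}$ are compatible: an embedding $\lambda\colon \tilde U \to \tilde V$ of $X$-charts induces an embedding of the reduced charts, and the pullback relation $\lambda^*\beta_V = \beta_U$ on $X_{\mathrm{red}}$ must be shown to coincide with the corresponding relation for $X$. Because the underlying manifold charts $\tilde U$, $\tilde V$ are unchanged in passing to the reduced orbifold and the chart embeddings are the same maps (only the group data changes), this compatibility is essentially formal, but it needs to be stated carefully so that the collection of local tensors genuinely assembles into a global real analytic Riemannian metric on $X$ in the sense of the definition being used. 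Once this identification is in place, the theorem follows immediately from the reduced case together with the frame bundle result \cite{IK} already cited.
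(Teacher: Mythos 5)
Your proposal is correct and follows essentially the same route as the paper: handle the reduced case via the frame bundle together with the invariant metric from \cite{IK} and the induced metric on the quotient, then transfer the metric from $X_{\rm red}$ to $X$ using the fact that a $G/{\rm ker}(G)$-invariant metric on a chart is automatically $G$-invariant (this transfer is exactly the paper's Proposition \ref{one-to-one}, which it notes follows immediately from Definition \ref{riemmetriikka}). The chart-compatibility check you flag as the main obstacle is indeed, as you suspect, formal, since the charts and embeddings are unchanged in passing to the reduced orbifold.
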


Since, by Theorem \ref{realanalcase},
every real analytic orbifold has a real analytic Riemannian metric, we can
construct the orthonormal frame bundle ${\rm OFr}(X)$ for every reduced
real analytic orbifold. Exactly as in the smooth case 
(\cite{ALR}, Theorem 1.23) we prove:

\begin{theorem}
\label{Orthonframe}
Let $X$ be a reduced $n$-dimensional real analytic orbifold. Then $X$ is real analytically
diffeomorphic to the quotient orbifold ${\rm OFr}(X)/{{\rm O}(n)}$.
\end{theorem}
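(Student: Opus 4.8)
The plan is to follow the strategy of \cite{ALR}, Theorem 1.23, building ${\rm OFr}(X)$ chart by chart and then recovering $X$ as the quotient by ${\rm O}(n)$, while checking at every step that the constructions and maps are real analytic rather than merely smooth. First I would fix a real analytic Riemannian metric $\alpha$ on $X$, which exists by Theorem \ref{realanalcase}. Over a connected orbifold chart with finite group $\Gamma$ acting effectively and real analytically, let ${\rm OFr}(\tilde U)$ be the bundle of $\alpha$-orthonormal frames on $\tilde U$. This is a real analytic manifold carrying a real analytic right ${\rm O}(n)$-action by change of orthonormal frame and a commuting left $\Gamma$-action induced by the differentials of the elements of $\Gamma$.

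The key local point is that $\Gamma$ acts \emph{freely} on ${\rm OFr}(\tilde U)$. Since the pullback of $\alpha$ to $\tilde U$ is $\Gamma$-invariant, each $\gamma \in \Gamma$ acts as a real analytic isometry. If $\gamma$ fixes a frame $(x; e_1, \dots, e_n)$, then $\gamma(x) = x$ and $d\gamma_x$ fixes the basis $e_1, \dots, e_n$, so $d\gamma_x = \mathrm{id}$. An isometry of a connected Riemannian manifold is determined by its value and differential at a single point, so $\gamma$ is the identity map on $\tilde U$; since the action is effective, $\gamma$ is the identity element of $\Gamma$. Hence $\Gamma$ acts freely, and being finite it acts properly and real analytically, so the quotient ${\rm OFr}(\tilde U)/\Gamma$ is a real analytic manifold. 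These local pieces glue along the real analytic transition maps of $X$, each of which lifts to an ${\rm O}(n)$-equivariant real analytic open embedding of frame bundles, producing a global real analytic manifold ${\rm OFr}(X)$.

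It then remains to analyze the ${\rm O}(n)$-action and identify the quotient. The action descends from the free ${\rm O}(n)$-action on each ${\rm OFr}(\tilde U)$ and is real analytic; it is proper because ${\rm O}(n)$ is compact. For almost freeness, an element $A \in {\rm O}(n)$ fixes the class $[f]$ of a frame $f$ over $x$ precisely when $f \cdot A = d\gamma_x \cdot f$ for some $\gamma$ in the isotropy group $\Gamma_x$, which identifies the isotropy of $[f]$ with the finite group $\Gamma_x$ via $\gamma \mapsto f^{-1}\, d\gamma_x\, f$ (injectivity being the freeness argument above). Finally, quotienting the local model in two stages gives
\[
\bigl({\rm OFr}(\tilde U)/\Gamma\bigr)/{\rm O}(n) \;=\; \bigl({\rm OFr}(\tilde U)/{\rm O}(n)\bigr)/\Gamma \;=\; \tilde U/\Gamma,
\]
so the charts of ${\rm OFr}(X)/{\rm O}(n)$ coincide with those of $X$, and assembling these identifications yields the desired real analytic orbifold diffeomorphism.

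The main obstacle is ensuring that the passage from the smooth to the real analytic category goes through at every stage: one must know that the quotient of a real analytic manifold by a finite group acting freely and real analytically is again real analytic, and that the orbifold transition data lift real analytically. The crux is the freeness of $\Gamma$ on ${\rm OFr}(\tilde U)$; here the rigidity of Riemannian isometries on a connected manifold gives the conclusion directly and cleanly, so reducedness (effectiveness) enters exactly at this point.
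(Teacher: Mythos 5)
Your proposal is correct and follows essentially the same route as the paper: the paper's proof simply invokes Theorem \ref{realanalcase} for the existence of a real analytic Riemannian metric and then defers to the smooth argument of \cite{ALR}, Theorem 1.23, which is exactly the argument you have written out (orthonormal frames over charts, freeness of the chart group via rigidity of isometries on a connected manifold, and the two-stage quotient identification), with the real-analyticity checks made explicit. In effect you have supplied the details that the paper leaves to the reference, so there is nothing to correct.
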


Notice that if $X$ is a reduced $n$-dimensional
real analytic orbifold, then ${\rm OFr}(X)$ 
is a real analytic manifold with a real analytic, effective,
almost free action of the orthogonal group 
${\rm O}(n)$.
Thus Theorem \ref{Orthonframe} implies the following:

\begin{cor}
\label{quotbygroup}
Let $X$ be a reduced $n$-dimensional real analytic orbifold. Then
$X$ is real analytically diffeomorphic to a quotient orbifold $M/{\rm O}(n)$,
where $M$ is a real analytic manifold and ${\rm O}(n)$ acts on $M$
real analytically, effectively  and almost freely.
\end{cor}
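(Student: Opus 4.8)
The plan is to set $M = {\rm OFr}(X)$ and deduce the statement directly from Theorem \ref{Orthonframe} together with the structural facts about the orthonormal frame bundle noted in the remark above. Since $X$ is a reduced $n$-dimensional real analytic orbifold, Theorem \ref{realanalcase} provides a real analytic Riemannian metric on $X$, and this metric is exactly what is needed to pass from the full frame bundle ${\rm Fr}(X)$ to the orthonormal frame bundle ${\rm OFr}(X)$; without it the orthonormality condition, and hence the reduction of structure group from ${\rm GL}_n({\mathbb{R}})$ to ${\rm O}(n)$, would not be available in the real analytic category.

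First I would recall the construction of ${\rm OFr}(X)$ locally: over an orbifold chart the orthonormal frames with respect to the chosen real analytic metric form a real analytic principal ${\rm O}(n)$-bundle, on which the finite chart group acts real analytically and commutes with the ${\rm O}(n)$-action. Gluing these local pieces yields the real analytic manifold $M = {\rm OFr}(X)$, the transition data being real analytic because both the metric and the charts are. Next I would confirm the three asserted properties of the ${\rm O}(n)$-action. Real analyticity is immediate from the local bundle structure; effectiveness holds because ${\rm O}(n)$ acts effectively on each fiber of orthonormal frames; and almost freeness follows because the stabilizer of a frame is controlled by the finite chart groups, which act effectively precisely because $X$ is reduced, so that all isotropy subgroups of the ${\rm O}(n)$-action on $M$ are finite.

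Finally, with $M = {\rm OFr}(X)$ established as a real analytic manifold carrying a real analytic, effective, almost free ${\rm O}(n)$-action, Theorem \ref{Orthonframe} asserts precisely that $X$ is real analytically diffeomorphic to $M/{\rm O}(n)$, which is the desired conclusion. I do not expect a genuine obstacle here, since the corollary is essentially a repackaging of Theorem \ref{Orthonframe}; the only point demanding care is verifying that the frame bundle construction and the resulting group action remain within the real analytic category throughout, and this is already secured by Theorem \ref{realanalcase} and the real analyticity of the orbifold charts.
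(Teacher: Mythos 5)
Your proof is correct and follows exactly the paper's route: the paper likewise takes $M={\rm OFr}(X)$, whose construction is enabled by the real analytic metric from Theorem \ref{realanalcase}, notes that ${\rm O}(n)$ acts on it real analytically, effectively and almost freely, and then invokes Theorem \ref{Orthonframe} to conclude $X\cong {\rm OFr}(X)/{\rm O}(n)$.
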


It follows that reduced real analytic orbifolds can be studied by using  methods 
developed for studying real analytic almost free actions of compact Lie groups.

To prove Theorems \ref{realanalcase} and \ref{Orthonframe}, we use two
kinds of comparisons. Firstly,  we compare Riemannian metrics
on a quotient orbifold $M/G$ to $G$-invariant Riemannian metrics on
the $G$-manifold $M$ (Section \ref{quotmetr}).  Secondly, we
compare Riemannian metrics on an orbifold  to those on  the 
corresponding reduced orbifold (Section \ref{compX}). 
We conclude the paper  by applying these comparisons to prove
a result concerning complete Riemannian metrics:

\begin{theorem}
\label{complthm}
Let $X$ be a smooth (resp. real analytic) orbifold. For any smooth 
(resp. real analytic) Riemannian metric
$\alpha$ on $X$ there exists a complete smooth 
(resp. real analytic) Riemannian metric 
on $X$ which is conformal to $\alpha$.
\end{theorem}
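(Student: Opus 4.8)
The plan is to reduce the statement to an equivariant version of the Nomizu--Ozeki theorem on a manifold and to exploit the two comparisons set up in Sections \ref{quotmetr} and \ref{compX}. First I would use the comparison between an orbifold and its associated reduced orbifold from Section \ref{compX} to reduce to the case that $X$ is reduced: the underlying metric space of a conformal class is unchanged under this comparison, so a complete metric conformal to $\alpha$ on the reduced orbifold yields one on $X$, and conversely. Assuming $X$ reduced and $n$-dimensional, Corollary \ref{quotbygroup} lets me write $X = M/G$ with $G = \mathrm{O}(n)$ acting real analytically (resp.\ smoothly), properly, effectively and almost freely on a manifold $M$. By the comparison of Section \ref{quotmetr}, the metric $\alpha$ lifts to a $G$-invariant metric $\beta$ on $M$, and a $G$-invariant complete metric on $M$ conformal to $\beta$ descends to a complete metric on $X$ conformal to $\alpha$. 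Thus everything reduces to producing such an invariant conformal complete metric on $M$.

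For the core step I would give a construction that works verbatim in both categories. First fix a $G$-invariant proper exhaustion $h \colon M \to \mathbb{R}$: embedding $M$ in some $\mathbb{R}^N$ (smoothly, resp.\ real analytically) and pulling back the squared norm gives a proper function $h_0$, and averaging $h(x) = \int_G h_0(gx)\,dg$ produces a $G$-invariant proper exhaustion in the same category, properness surviving because $G$ is compact, so that $Gx_k \to \infty$ forces $\min_{g\in G} h_0(gx_k) \to \infty$. I then set
\[
\tilde\beta = \bigl(1 + |dh|_\beta^2\bigr)\,\beta .
\]
Since $h$ and $\beta$ are $G$-invariant, so is the conformal factor, and since $|dh|_\beta^2 = \beta^{ij}\partial_i h\,\partial_j h$ it is smooth (resp.\ real analytic) everywhere, with no square-root singularity at the critical points of $h$. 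For any curve $\gamma$ leaving every compact subset of $M$ one has
\[
L_{\tilde\beta}(\gamma) = \int \sqrt{1+|dh|_\beta^2}\;|\gamma'|_\beta\,dt \ \ge\ \int |dh|_\beta\,|\gamma'|_\beta\,dt \ \ge\ \int |(h\circ\gamma)'|\,dt ,
\]
and the last integral is the total variation of $h\circ\gamma$, which is infinite because $h$ is proper. Hence every divergent curve has infinite $\tilde\beta$-length, so $\tilde\beta$ is complete. This single argument handles both the smooth and the real analytic case at once.

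Finally I would push $\tilde\beta$ down to $X$. The conformal factor is a $G$-invariant function, so $\tilde\beta$ descends to a metric $\tilde\alpha$ on $X$ conformal to $\alpha$, and it remains to see that completeness descends. Because $G$ is compact and acts properly, the quotient distance on the underlying space $M/G$ is $d([x],[y]) = \min_{g\in G} d_{\tilde\beta}(x, gy)$, and a Cauchy sequence downstairs lifts, after applying suitable group elements, to a Cauchy sequence upstairs; completeness of $\tilde\beta$ then gives a limit, so $\tilde\alpha$ is complete. The hard part will be precisely this interface between the three notions of completeness --- for the invariant metric $\tilde\beta$ on $M$, for the quotient length space $M/G$, and for the orbifold metric $\tilde\alpha$ on $X$ (reduced and then general) --- since I must verify, using the comparison lemmas of Sections \ref{quotmetr} and \ref{compX}, that the comparisons identify conformal classes and preserve the induced distance functions closely enough that completeness is transported in both directions. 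Once this is established, the theorem follows, as every construction above stays in the chosen category.
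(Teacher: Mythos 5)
Your proposal is correct, and its global architecture coincides with the paper's: reduce to the reduced orbifold via Proposition \ref{one-to-one} and Remark \ref{huomautus}, realize $X_{\rm red}$ as ${\rm OFr}(X_{\rm red})/{\rm O}(n)$ (Theorem \ref{Orthonframe}, Corollary \ref{quotbygroup}), lift $\alpha$ to an ${\rm O}(n)$-invariant metric $\beta$ upstairs, complete $\beta$ by an invariant conformal change, and push the result back down. Where you genuinely diverge is the core step. The paper treats it as a black box, citing the author's equivariant Nomizu--Ozeki theorem (\cite{Ka}, Theorems 3.1 and 5.2) to produce an invariant $\omega$ with $\omega^2\beta$ complete; you instead construct the conformal factor by hand: an invariant proper exhaustion $h$ (embed $M$ in ${\mathbb{R}}^N$ --- Whitney in the smooth case, Grauert in the real analytic case --- pull back the squared norm, average over the group), then $\tilde\beta=(1+|dh|_\beta^2)\beta$, whose completeness follows from the divergent-curve criterion because the $\tilde\beta$-length of a curve dominates the total variation of $h$ along it. This is Gordon's completeness trick, and your verification (no square-root singularity in the factor, properness surviving the averaging, the length inequality) is sound. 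What your route buys is a self-contained, category-uniform proof of the key step; what it costs is reliance on Grauert's embedding theorem and on compactness of the group (both the averaging and the attained minimum in your properness argument use that $G={\rm O}(n)$ is compact --- sufficient here, whereas \cite{Ka} covers general proper actions). Three small repairs are needed: (i) the lift of $\alpha$ to $\beta$ is not provided by Section \ref{quotmetr} (Theorem \ref{indmetr} goes in the opposite direction); it is Proposition \ref{seuraus}, quoted from \cite{SU}; (ii) your embedding step requires $M$ to be second countable, so you should first reduce to connected $X$ (harmless, since the conformal factor and completeness can be handled component by component), after which ${\rm OFr}(X_{\rm red})$ is second countable; (iii) the ``hard interface'' you flag at the end --- that the quotient distance of $d_{\tilde\beta}$ agrees with the orbifold Riemannian distance, so completeness transports both ways --- is exactly what the paper proves in Theorem \ref{samametriikka} and Corollary \ref{korollaari} (with Remark \ref{huomautus} for the reduced/non-reduced comparison), so you can cite those results rather than re-derive the Cauchy-sequence lifting argument.
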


The corresponding result for Riemannian metrics on smooth
manifolds has been proven by K. Nomizu and H. Ozeki 
(\cite{NO}, Theorem 1).  The corresponding equivariant result,
which also is used in the proof of Theorem \ref{complthm},
was proved by the author (\cite{Ka},  Theorems 3.1 and 5.2).

{{\it Acknowledgements.} 
The author's research was supported by the visitors program of the
Aalto University during the academic year 2012 - 2013. The author would
like to thank the Department of  Mathematics and Systems Analysis 
of the Aalto University for its hospitality during her stay.}

\section{Definitions}
\label{orbi}

\noindent We first recall the definition of an orbifold:

\begin{definition}
\label{orbichartdef}
Let $X$ be a topological space and let $n\in {\mathbb{N}}$.
{\begin{enumerate}
\item An {\it orbifold chart} of $X$   is a triple $(\widetilde{U}, G, \varphi)$,
where $\widetilde{U}$ is an open connected subset of ${\mathbb{R}}^n$,
$G$ is a finite group acting  on $\widetilde{U}$ and
$\varphi\colon \widetilde{U}\to X$ is a $G$-invariant map inducing
a homeomorphism 
$U=\varphi(\widetilde{U})\cong\widetilde{U}/G$. Let ${\rm ker}(G)$ be the subgroup
of $G$ acting trivially on $\widetilde{U}$.
\item An {\it embedding} $(\lambda,\theta)\colon (\widetilde{U}, G, \varphi)\to
(\widetilde{V}, H,\psi)$ between two orbifold charts is  an injective
homomorphism $\theta\colon G\to H$ such that $\theta$ is an
isomorphism from ${\rm ker}(G)$ to ${\rm ker}(H)$, and
an  equivariant embedding
$\lambda\colon \widetilde{U}\to\widetilde{V}$ with $\psi\circ\lambda=\varphi$.
\item An {\it orbifold atlas} on $X$ is a family ${\cal{U}}=\{ (\widetilde{U}, G,\varphi)\}$
of orbifold charts  which cover $X$ and satisfy the following: For any two
charts $(\widetilde{U},G,\varphi)$ and $(\widetilde{V}, H,\psi)$ and a point $x\in
\varphi(\widetilde{U})\cap \psi(\widetilde{V})$, there exist a chart $(\widetilde{W}, K,\mu)$ 
such that $x\in\mu(\widetilde{W})$ and
embeddings $(\widetilde{W}, K,\mu)\to (\widetilde{U},G,\varphi)$ and
$(\widetilde{W}, K,\mu)\to (\widetilde{V}, H,\psi)$.
\item An orbifold atlas ${\cal{U}}$ {\it refines} another orbifold atlas
${\cal{V}}$ if every chart in ${\cal{U}}$ can be embedded into some
chart in ${\cal{V}}$. Two orbifold atlases are {\it equivalent} if they have a common
refinement.
\end{enumerate}}
\end{definition}

\begin{definition}
\label{orbidef}
An $n$-dimensional  {\it orbifold} is a paracompact Hausdorff space
$X$ equipped with an equivalence class of 
$n$-dimensional orbifold atlases.
\end{definition}

An orbifold is called {\it smooth} (resp. {\it real analytic}), if 
for every orbifold chart $(\widetilde{U}, G,\varphi)$, $G$ 
acts smoothly (resp. real analytically) on $\widetilde{U}$ and if each embedding
$\lambda\colon\widetilde{U}\to\widetilde{V}$ is smooth (resp. real analytic).

Let $X$ be an orbifold, and let $x\in X$. Let
$(\widetilde{U}, G,\varphi)$  and $(\widetilde{V}, H,\psi)$
be orbifold charts of  $X$ such that 
$x\in\varphi(\widetilde{U})\cap \psi(\widetilde{V})$. 
Let $\tilde{x}\in\widetilde{U}$ and $\tilde{y}\in\widetilde{V}$
be such that $\varphi(\tilde{x})=\psi(\tilde{y})=x$. 
We denote the isotropy subgroups at $\tilde{x}$ and $\tilde{y}$
by $G_{\tilde{x}}$ and $H_{\tilde{y}}$, respectively. Then
$G_{\tilde{x}}$ and $H_{\tilde{y}}$ are isomorphic. Thus 
we can associate to every $x\in X$ a finite group, well-defined
up to an isomorphism, and called {\it the local group} of $x$.

Orbifold maps are defined as follows:

\begin{definition}
\label{orbimap}
Let $X$ and $Y$ be smooth (real analytic) orbifolds.  We call a map
$f\colon X\to Y$ a {\it smooth (real analytic) orbifold map}, 
if for every $x\in X$, there
are charts $(\widetilde{U}, G,\varphi)$ around $x$ and $(\widetilde{V}, H,\psi)$
around $f(x)$, such that $f$ maps $U=\varphi(\widetilde{U})$ into
$V=\psi(\widetilde{V})$ and the restriction $f\vert U$
can be lifted to a smooth
(real analytic)  equivariant map
$\widetilde{f}\colon\widetilde{U}\to \widetilde{V}$. A smooth (real analytic)
map $f\colon X\to Y$ is called  a {\it smooth (real analytic)
diffeomorphism} if $f$ is a bijection and if the inverse map
$f^{-1}\colon Y\to X$ is smooth (real analytic).
\end{definition}

\begin{remark}
\label{huomatutus}
Let $X$ be an orbifold and let $F$ be any finite group.
Replace every orbifold chart $(\widetilde{U}, G,\varphi)$ of $X$ by the orbifold chart
$(\widetilde{U}, G\times F, \varphi)$, where $G\times F$ acts on $\widetilde{U}$
via the projection $G\times F\to G$. Doing this for every orbifold chart
of $X$ yields an orbifold $Y$. The identity maps $X\to Y$
and $Y\to X$ are orbifold maps, and they are smooth (real analytic) if
$X$ and $Y$ are smooth (real analytic) orbifolds. This means that
two orbifolds are not necessarily considered equivalent, even if they
are diffeomorphic.  However, if there is a diffeomorphism $f\colon X\to Y$,
where both $X$ and $Y$ are reduced orbifolds,
then $X$ and $Y$ have equivalent sheaf categories (\cite{MP}, Proposition 2.1).
In particular, in this case  the local groups  of $x$ and
$f(x)$ are isomorphic, for every $x\in X$.
\end{remark}





\begin{definition}
\label{riemmetriikka}
A {\it Riemannian metric} $\alpha$
on an orbifold $X$ is given by a collection of Riemannian metrics
$\alpha^{\widetilde{U}}$on the  $\widetilde{U}$ of the orbifold 
charts $(\widetilde{U}, G,\varphi)$ so that
{\begin{enumerate}
\item the group $G$ acts isometrically on $\widetilde{U}$ and
\item the embeddings  $\widetilde{W}\to \widetilde{U}$ and $\widetilde{W}\to \widetilde{V}$
of Part 3 of Definition \ref{orbichartdef} are isometries.
\end{enumerate}}
If  $X$ is a smooth (real analytic) orbifold and if
all the $\alpha^{\widetilde{U}}$ are smooth (real analytic), then $\alpha$ is
a smooth (real analytic) Riemannian metric.
\end{definition}

Let  $X$ be a smooth orbifold, and
let  $(\widetilde{U}_i, G_i,
\varphi_i)$, $i\in I$, be orbifold  charts of $X$ such that
$\{\varphi_i(\widetilde{U}_i)\}_{i\in I}$ is a locally finite cover
of $X$.  Then each $\widetilde{U}_i$ has a
smooth  Riemannian metric 
$\alpha^{\widetilde{U}_i}$, and by averaging over  $G_i$, we may assume
that  $\alpha^{\widetilde{U}_i}$ is $G_i$-invariant, i.e., that
$G_i$ acts isometrically on $\widetilde{U}_i$.
Gluing these Riemannian metrics together, by
using a smooth partition of unity,  gives
a smooth Riemannian metric on $X$
(\cite{MM}, Proposition 2.20).
All the orbifolds in \cite{MM}
are assumed to be reduced. However, the proof of Proposition
2.20 also works in the general case.

We next recall the way to define {\it distance} on a 
connected Riemannian orbifold,
for details and proofs, see \cite{Bo}. 
Assume a smooth (real analytic) orbifold $X$ is equipped with a
smooth (real analytic) Riemannian metric  $\alpha$.
Let $\gamma\colon [0,1]\to X$ be an { \it admissible}
curve (\cite{Bo}, Definition 35). 
The interval $[0,1]$ can be decomposed into finitely
many subintervals $[t_i, t_{i+1}]$ such that $\gamma( [t_i, t_{i+1}])\subset U_i=\varphi(
\widetilde{U}_i)$, for some chart $(\widetilde{U}_i, G_i,\varphi_i)$ of $X$. Let
$\gamma_i$ denote the restriction of $\gamma$
to $[t_i,t_{i+1}]$, and 
let $\widetilde{\gamma}_i$ be a lift of $\gamma_i$,  for every $i$.
If $\widetilde{\gamma}_i$ is piecewise differentiable, its length
can be calculated by integrating.
If $\widetilde{\gamma}_i$ is merely continuous, then its length can be
calculated by approximating it by piecewise differentiable  curves.
Every lift of $\gamma_i$ has the same length and
the length of the lift does not depend on which chart of $X$ is being used.
Thus the lenght  $L_\alpha(\gamma_i)$ of $\gamma_i$  can be defined 
to  be the lenght of $\widetilde{\gamma}_i$. 
Then the length $L_\alpha(\gamma)$ of $\gamma$
equals the sum of the $L_\alpha(\gamma_i)$.

Every  curve connecting two points on an orbifold can always be
replaced by an admissible curve whose local lifts are at most as long as the
ones of the original curve (\cite{Bo}, Remark 39). Thus
the distance
between any given points $x$ and $y$ of a connected orbifold
$X$ can be defined to be
$$
d_\alpha(x,y)=
\inf \{ L_\alpha(\gamma)\mid \gamma \,\,  {\rm is \,\, an\,\,  admissible\,\,
curve\,\, joining}\,\, x \,\,{\rm to}\,\,  y\}.
$$

Then $X$ equipped with the metric $d_\alpha$ becomes a metric space. 
If $d_\alpha$ is a complete metric, then any two points on 
 $X$ can be joined 
by a minimal geodesic realizing the distance $d_\alpha(x,y)$ (\cite{Bo}, Theorem
40). Moreover, $X$ is a locally compact length space. Thus it follows that
$d_\alpha$ is a complete metric if and only if the metric balls in $X$
are relatively compact.

\section{Riemannian metric on a quotient orbifold}
\label{quotmetr}

\noindent Let $G$ be a Lie group and let $M$ be a smooth
(real analytic) manifold. Assume $G$ acts on $M$ by a
smooth (real analytic) almost free action. Assume also
that the action is {\it proper}, i.e., that the map
$$
G\times M\to M\times M,\,\,\, (g,x)\mapsto (gx,x),
$$
is proper.
It is well-known that the quotient $M/G$ is a smooth
(real analytic) orbifold and that every smooth (real analytic)
$G$-invariant Riemannian metric on $M$ induces a smooth
(real analytic) Riemannian metric on $M/G$. 
We present  a proof of this basic result here (Theorem \ref{indmetr}), 
since we failed to find one in the literature.

The main idea is to use the {\it differentiable slice theorem}:
Let $x\in M$ and let $Gx$ denote the orbit of $x$.  Let
$G_x$ denote the isotropy subgroup of $G$ at $x$.
A $G$-invariant neighbourhood of $x$ can be equipped with 
a smooth (real analytic) $G$-invariant Riemannian metric.
Then there is a
$G_x$-invariant smooth (real analytic) submanifold ${\rm N}_x$
of $M$ that contains $x$ and is $G_x$-equivariantly diffeomorphic to
an open $G_x$-invariant neighbourhood of the origin in
the normal space ${\rm T}_x(M)/{\rm T}_x(Gx)$ to $Gx$ at $x$. 
The manifold ${\rm N}_x$ is called
a {\it linear slice} at $x$. It intersects the orbit $Gx$ orthogonally,
and it intersects
every orbit it meets trasversely
but not necessarily orthogonally.
The exponential map
takes an open neighbourhood of the zero section of the normal
bundle of $Gx$ diffeomorphically to the neighbourhood $G{\rm N}_x$ of $Gx$
which can be identified with the twisted product $G\times_{G_x}
{\rm N}_x$. The map 
$$
f\colon G{\rm N}_x\cong G\times_{G_x}{\rm N}_x\to G/G_x\cong Gx,\,\,\,
gy\mapsto gx,
$$
is smooth (real analytic) and $G$-equivariant. The map $f$ is exactly
the map that assigns to every point $z$ in $G{\rm N}_x$ the unique 
nearest point $f(z)$ in
$Gx$.  Thus, if $g\in G$ and
$y\in {\rm N}_x$, then the distance from $gy$ to $Gx$ equals
$d(gy,gx)=d(y,x)$, 
where $d$ denotes the metric induced by the local Riemannian metric
on the connected components of $G{\rm N}_x$.

Let $x\in M$ and let ${\rm N}_x$ be a linear slice at $x$
constructed by using a local $G$-invariant Riemannian metric $\alpha_1$.
There is a real analytic local cross section
$\delta\colon U\to G$ of the map $G\to G/G_x$, $g\mapsto gG_x$,
defined in some $G_x$-invariant
neighbourhood $U$ of $eG_x$ in $G/G_x$  and
having the property $\delta(eG_x)=e$.  We may choose $\delta$ to
be $G_x$-equivariant, i.e., $\delta(hu)=h\delta(u)h^{-1}$,
for every $h\in G_x$ and for every $u\in U$.
Let $f_0\colon G{\rm N}_x\to G/G_x$, $gy\mapsto gG_x$.
The map
$F\colon U\times {\rm N}_{x}\to V$,
$(u,s)\mapsto \delta(u)s$, is a smooth (real analytic) diffeomorphism onto some
neighbourhood $V$ of ${\rm N}_x$. The inverse of $F$ is given by
$F^{-1}\colon V\to U\times {\rm N}_{x}$,
$y\mapsto (f_0(y),\delta(f_0(y))^{-1}y)$. 
Let then $y\in {\rm N}_x$ and let
${\rm N}'_y$ be a linear slice at $y$ constructed by using a local
$G$-invariant Riemannian metric $\alpha_2$. We may assume
that ${\rm N}'_y\subset V$. 
Let ${\rm pr}\colon U\times {\rm N}_x\to
{\rm N}_x$ be the projection, and let $\lambda={\rm pr}\circ 
F^{-1}\vert\colon {\rm N}'_y\to {\rm N}_x$.  
Then $\lambda$ is an equivariant 
embedding and it
induces the identity map on the orbit space level.

It follows that the quotient $M/G$ is an orbifold with orbifold
charts $({\rm N}_x, G_x, \pi_x)$, where $x\in M$ and $\pi_x$ denotes
the  natural projection ${\rm N}_x\to {\rm N}_x/G_x
\cong (G{\rm N}_x)/G$. The ${\rm N}_x$ are defined by using local
$G$-invariant Riemannian metrics.


Let us next consider a smooth (real analytic) $G$-invariant
Riemannian metric $\alpha$ on $M$. For every $x\in M$, let
${\rm N}_x$ be a linear slice constructed by using $\alpha$.
Then  $\alpha$ induces a smooth
(real analytic) $G_x$-invariant Riemannian metric $\alpha\vert{\rm N}_x$ on
${\rm N}_x$, for every $x$.  (The inner product on ${\rm T}_z{\rm N}_x$,
$z\in {\rm N}_x$, is
given by first projecting to ${\rm T}_z{\rm N}_z$ and then composing with
$\alpha$.)
Thus $G_x$ acts isometrically on
${\rm N}_x$, for every $x\in M$. By construction, the embeddings
${\rm N}_y\to {\rm N}_x$ are isometries.
Let then ${\rm N}'_x$ be
a linear slice at $x$ defined by using some local $G$-invariant Riemannian
metric. The map $\lambda\colon {\rm N}'_x\to {\rm N}_x$ induces a smooth
(real analytic) $G_x$-invariant Riemannian metric 
$\lambda_\ast(\alpha\vert{\rm N}_x)$ on ${\rm N}'_x$. Thus
also $\lambda$ is an isometry.
Consequently, the Riemannian metrics obtained on the linear slices satisfy the conditions
of Definition \ref{riemmetriikka}. Therefore, $\alpha$ induces a smooth
(real analytic) Riemannian metric on $M/G$. We have proved:

\begin{theorem}
\label{indmetr}
Let $G$ be a Lie group and let $M$ be a smooth (real analytic) manifold
on which $G$ acts by a proper, smooth (real analytic) almost free
action. Then the quotient $M/G$ is a smooth (real analytic)
orbifold. Every smooth (real analytic) $G$-invariant
Riemannian metric $\alpha$ on $M$  induces a smooth
(real analytic) Riemannian metric $\hat{\alpha}$ on $M/G$.
\end{theorem}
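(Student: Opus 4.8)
The plan is to assemble the statement from the slice-theoretic machinery developed above, in two stages: first establishing the orbifold structure on $M/G$, and then showing that a given $G$-invariant metric descends. I would begin by fixing, for every $x\in M$, a linear slice ${\rm N}_x$ furnished by the differentiable slice theorem, and take the triples $({\rm N}_x, G_x, \pi_x)$ as the candidate orbifold charts, where $\pi_x\colon {\rm N}_x\to {\rm N}_x/G_x\cong (G{\rm N}_x)/G$ is the natural projection. Almost freeness guarantees that each $G_x$ is finite, and properness lets us choose the slices small enough that $G{\rm N}_x\cong G\times_{G_x}{\rm N}_x$. To verify the atlas axioms, in particular Part 3 of Definition \ref{orbichartdef}, I would use the equivariant embeddings $\lambda={\rm pr}\circ F^{-1}\vert\colon {\rm N}'_y\to {\rm N}_x$ built from the $G_x$-equivariant cross section $\delta$: given two slice charts meeting along an orbit, a slice at a suitable nearby point embeds equivariantly into both, and since $\lambda$ induces the identity on the orbit-space level, the required compatibility follows.

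For the second stage, given a $G$-invariant Riemannian metric $\alpha$, I would construct the slices ${\rm N}_x$ using $\alpha$ itself and equip each with the restricted metric $\alpha\vert{\rm N}_x$. The first condition of Definition \ref{riemmetriikka}, that $G_x$ acts isometrically on ${\rm N}_x$, is immediate: since $G_x\subset G$ and $\alpha$ is $G$-invariant, its restriction is $G_x$-invariant. The substance lies in the second condition, that the chart embeddings are isometries.

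The main obstacle is precisely this second condition, and it is subtle because the orbifold atlas is defined using slices built from \emph{arbitrary} local $G$-invariant metrics, whereas the metric I wish to push down comes from the single fixed $\alpha$. For two slices with an embedding ${\rm N}_y\to {\rm N}_x$, both constructed from $\alpha$, the embedding is an isometry directly by construction: the exponential map carries a neighbourhood of the zero section of the normal bundle onto $G{\rm N}_x$ compatibly with $\alpha$, and the nearest-point map $f$ yields $d(gy,Gx)=d(y,x)$, so the slice metric is exactly the transverse restriction of $\alpha$. To reconcile this with a chart ${\rm N}'_x$ of the canonical atlas, built from a different local metric, I would transport $\alpha\vert{\rm N}_x$ along the equivariant embedding $\lambda\colon {\rm N}'_x\to {\rm N}_x$ to obtain the $G_x$-invariant metric $\lambda_\ast(\alpha\vert{\rm N}_x)$ on ${\rm N}'_x$; because $\lambda$ is equivariant and induces the identity on orbit spaces, it is then an isometry, so $\alpha$ assigns consistent metrics across all charts. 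Collecting these slice metrics gives a family satisfying both conditions of Definition \ref{riemmetriikka}, hence a well-defined induced metric $\hat{\alpha}$ on $M/G$, with smoothness (real analyticity) inherited from that of $\alpha$, the slice construction, and the maps $\delta$ and $\lambda$.
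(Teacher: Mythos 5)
Your first stage (the slice charts $({\rm N}_x,G_x,\pi_x)$ and their compatibility via $\lambda={\rm pr}\circ F^{-1}$) matches the paper, and so does your device of transporting the metric to charts built from other local metrics by the pushforward $\lambda_\ast(\alpha\vert{\rm N}_x)$. But there is a genuine gap in the second stage, at exactly the point you call the substance of the proof: you equip each slice ${\rm N}_x$ with ``the restricted metric,'' i.e.\ the submanifold metric that $\alpha$ induces on ${\rm N}_x$, and you assert that for two slices built from $\alpha$ the embedding ${\rm N}_y\to{\rm N}_x$ is an isometry ``directly by construction,'' concluding that ``the slice metric is exactly the transverse restriction of $\alpha$.'' Those are two \emph{different} metrics, and the proof stands or falls on which one you take. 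The slice ${\rm N}_x$ is orthogonal to the orbit only at its center $x$; at any other point it meets the orbits transversely but in general not orthogonally. Concretely, at the point $y$ itself one has $\lambda(y)=y$, and $d\lambda_y$ sends a vector $v\in{\rm T}_y{\rm N}_y$ (which is $\alpha$-orthogonal to ${\rm T}_y(Gy)$ by construction of ${\rm N}_y$) to $v+\xi$, where $\xi\in{\rm T}_y(Gy)$ is the correction needed to land in ${\rm T}_y{\rm N}_x$. For the submanifold metrics, $\Vert v+\xi\Vert^2=\Vert v\Vert^2+\Vert\xi\Vert^2>\Vert v\Vert^2$ whenever $\xi\neq 0$, so $\lambda$ strictly increases lengths and is not an isometry. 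Hence with the restricted metric, condition (2) of Definition \ref{riemmetriikka} fails already among the $\alpha$-slices, and your appeal to $d(gy,Gx)=d(y,x)$ does not repair this: that identity concerns distances to the central orbit $Gx$, not the metric on ${\rm N}_x$ at points off the center.

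What the paper does instead, and what your argument is missing, is to define $\alpha\vert{\rm N}_x$ \emph{not} as the restriction but as the transverse metric: the inner product of vectors in ${\rm T}_z{\rm N}_x$ is computed by first projecting onto ${\rm T}_z{\rm N}_z$ (the tangent space at $z$ of the slice through $z$, which is orthogonal to the orbit $Gz$) and then applying $\alpha$. Equivalently, the metric lives on the normal spaces ${\rm T}_zM/{\rm T}_z(Gz)$, where $G$-invariance of $\alpha$ makes it well defined along each orbit; any equivariant embedding inducing the identity on the orbit space then respects it, and this single observation is what makes both the embeddings ${\rm N}_y\to{\rm N}_x$ between $\alpha$-slices and the maps $\lambda\colon{\rm N}'_x\to{\rm N}_x$ into isometries. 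The distinction you blur is not pedantic: the remark following Lemma \ref{huom} is precisely that the length $L_{\hat\alpha}$ (computed from the transverse metric) and the length $L_\alpha$ (computed from the submanifold metric) of a curve in ${\rm N}_x$ agree only when the curve meets every orbit orthogonally, and the proof of Theorem \ref{samametriikka} depends on keeping the two apart.
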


We leave it for the reader to verify the following observation:

\begin{lemma}
\label{viimeinen}
Let $G$ be a Lie group and let $M$ be a smooth (real analytic) manifold
on which $G$ acts by a proper, smooth (real analytic) almost free
action. Assume $M/G$ is connected.
Let $M_0$ be a connected component of $M$, and let
$H=\{ g\in G\mid gM_0=M_0\}$. Then the following hold:
{\begin{enumerate}
\item $H$ is a closed subgroup of $G$, and it contains the
connected component $G_0$ of the identity element of $G$,
\item the quotient orbifolds $M/G$ and $M_0/H$ are 
canonically smoothly
(real analytically) diffeomorphic,
\item there is a one-to-one correspondence between 
smooth (real analytic) $G$-invariant Riemannian metrics on $M$
and smooth (real analytic) $H$-invariant Riemannian metrics
on $M_0$,
\item there is a one-to-one correspondence between smooth
(real analytic) $G$-invariant maps $M\to{\mathbb{R}}$ and
smooth (real analytic) $H$-invariant maps $M_0\to
{\mathbb{R}}$.
\end{enumerate}}
\end{lemma}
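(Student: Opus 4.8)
The plan is to extract two structural facts first and then read off all four claims. The starting observation is that $G$ permutes the connected components of $M$, giving a map $\rho\colon G\to \pi_0(M)$ that sends $g$ to the component containing $gx_0$, where $x_0\in M_0$ is fixed. Since the orbit map $g\mapsto gx_0$ is continuous and $\pi_0(M)$ carries the discrete topology (manifolds being locally connected), $\rho$ is continuous, and $H=\rho^{-1}([x_0])$ is therefore both open and closed in $G$. That $H$ is a subgroup is immediate from $ghM_0=gM_0$ and $g^{-1}M_0=M_0$ for $g,h\in H$; being open and containing $e$ it contains $G_0$, and being closed it is a closed subgroup, which is Part 1. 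The crucial consequence I will use repeatedly is that, $H$ being open, the coset space $G/H$ is discrete. I would then show $M=GM_0$: the set $GM_0=\bigcup_{g\in G}gM_0$ is a $G$-invariant, open and closed subset of $M$, so its image in $M/G$ is open, closed and nonempty; since $M/G$ is connected this image is all of $M/G$, whence $GM_0=M$. Choosing coset representatives $\{g_j\}$ for $G/H$ thus yields a decomposition $M=\coprod_j g_jM_0$ into (unions of) components, equivalently a $G$-equivariant diffeomorphism $G\times_H M_0\to M$, $[g,q]\mapsto gq$; because $G/H$ is discrete this twisted product is just the disjoint union $\coprod_{G/H}M_0$ with $G$ permuting the copies and $H$ acting inside the base copy.

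For Part 2 I would check that two points of $M_0$ lie in the same $G$-orbit precisely when they lie in the same $H$-orbit: if $y=gx$ with $x,y\in M_0$ then $gM_0$ is the component of $gx=y$, namely $M_0$, so $g\in H$. Hence the natural continuous map $M_0/H\to M/G$ is a bijection. To see it is an orbifold diffeomorphism I would compare charts at a point $x\in M_0$. Since $G_x$ fixes $x\in M_0$ we have $G_x\subseteq H$, so $H_x=G_x$; moreover $M_0$ being open in $M$ gives ${\rm T}_x M_0={\rm T}_x M$, and $Gx\cap M_0=Hx$ forces ${\rm T}_x(Gx)={\rm T}_x(Hx)$, so the normal spaces coincide. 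Taking slices to be connected, a linear slice ${\rm N}_x$ for the $G$-action lies in $M_0$ and is simultaneously a linear slice for the $H$-action; the orbifold charts $({\rm N}_x,G_x,\pi_x)$ for $M/G$ and $({\rm N}_x,H_x,\pi_x)=({\rm N}_x,G_x,\pi_x)$ for $M_0/H$ are then literally the same, so the bijection is a smooth (real analytic) diffeomorphism.

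Finally, Parts 3 and 4 follow from the decomposition $M=\coprod_j g_jM_0$. Restriction sends a $G$-invariant Riemannian metric (resp.\ a $G$-invariant function) on $M$ to an $H$-invariant one on $M_0$. Conversely, given an $H$-invariant object $\beta$ on $M_0$ I would transport it to the component $g_jM_0$ via $g_j$ (the pushforward $(g_j)_\ast\beta$ for a metric, or $\beta\circ g_j^{-1}$ for a function); the result is independent of the chosen representative because $\beta$ is $H$-invariant, and it is smooth (real analytic) since each $g_j$ is a diffeomorphism and the $g_jM_0$ are open. One then verifies that the globally defined object is $G$-invariant and that the two constructions are mutually inverse.

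The only point requiring genuine care—and the place I expect the bookkeeping to be heaviest—is the verification that the transported metric is $G$-invariant \emph{across} components, not merely within each copy. This again reduces to the $H$-invariance of $\beta$ together with the coset computation $gg_j\in g_kH$: writing $gg_j=g_kh$ with $h\in H$ and comparing the metric at $gg_jq$ with the one at $g_jq$ isolates the single identity $h_\ast\beta=\beta$. The discreteness of $G/H$ is precisely what keeps all of this combinatorial rather than analytic, since there is no continuous family of cosets to track.
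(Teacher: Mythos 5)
Your proof is correct. Note that the paper offers no argument to compare against: it states this lemma with the remark ``We leave it for the reader to verify the following observation,'' so your write-up is a genuine filling-in of an omitted verification rather than an alternative to an existing one. The three pillars you isolate are exactly the right ones -- $H$ is an \emph{open} (hence closed) subgroup because $\pi_0(M)$ is discrete, $GM_0=M$ by connectedness of $M/G$, and the coset decomposition $M=\coprod_j g_jM_0$ with $G/H$ discrete -- and your chart comparison for Part 2 (connected slices at $x\in M_0$ lie in $M_0$, $G_x=H_x$, and ${\rm T}_x(Gx)={\rm T}_x(Hx)$, so the orbifold charts $({\rm N}_x,G_x,\pi_x)$ literally coincide) together with the transport computation $gg_j=g_kh\Rightarrow (gg_j)_\ast\beta=(g_k)_\ast\beta$ for Parts 3 and 4 are complete and consistent with how the paper sets up quotient orbifold charts in Section 3.
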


Let $G$, $M$ and $\alpha$ be as in Theorem \ref{indmetr}.
Let $x\in M$ and let ${\rm N}_x$ be a linear slice at $x$,
defined by using $\alpha$.
Let $\delta\colon U\to G$ be a real analytic cross section 
of the map $G\to G/G_x$, $g\mapsto gG_x$, as before Theorem
\ref{indmetr}. Let $F\colon U\times {\rm N}_{x}\to V$, be the
smooth (real analytic) diffeomorphism defined by using $\delta$,
and let ${\rm pr}\colon U\times {\rm N}_{x}\to {\rm N}_x$ denote
the projection.
Let $\gamma\colon [0,1]\to V\subset G{\rm N}_x$ be a curve.
The map ${\rm pr}\circ F^{-1}$ takes every point
in $V$ to a point in the same orbit.
Thus the curves ${\rm pr}\circ F^{-1}\circ\gamma$ and
$\gamma$ induce the same curve
$[0,1]\to M/G$.  Assume there is $c\in (0,1)$ such that
$\gamma(c)=x$. Let $\gamma_0$ be the geodesic segment
connecting $({\rm pr}\circ F^{-1}\circ \gamma)(0)$ to $x$ and let $\gamma_1$ 
be the geodesic segment connecting $x$ to $({\rm pr}\circ F^{-1}\circ\gamma)(1)$. 
Then  the two
geodesic segments are contained in ${\rm N}_x$ and they intersect orthogonally
the $G$-orbits they meet  (see \cite{AKLM}, the proof of Proposition
3.1 (2)). Let $\gamma^\ast$ denote the curve $\gamma_0\cup\gamma_1$.
We obtain:

\begin{lemma}
\label{huom}
For every curve $\gamma\colon [0,1]\to V$  
such that $\gamma(c)=x$, for some $c\in (0,1)$, there is
a curve $\gamma^\ast\colon [0,1]\to {\rm N}_x$ having the
following properties:
{\begin{enumerate}
\item $L_{\hat{\alpha}}(\gamma^\ast)
=L_\alpha(\gamma^\ast)
\leq L_\alpha(\gamma)$,
\item $\pi(\gamma^\ast(0))=\pi(\gamma(0))$ 
and $\pi(\gamma^\ast(1))=\pi(\gamma(1))$.
\end{enumerate}}
\end{lemma}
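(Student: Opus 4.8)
The plan is to push the whole comparison down to the quotient orbifold $M/G$, turn it into a length-versus-distance estimate for the orbifold metric $d_{\hat{\alpha}}$, and then lift the conclusion back to ${\rm N}_x$ using that $\gamma^\ast$ is horizontal.

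First I would dispose of Part (2) and record the one structural fact that drives Part (1). Put $\bar{\gamma}={\rm pr}\circ F^{-1}\circ\gamma$, a curve lying in ${\rm N}_x$. Since ${\rm pr}\circ F^{-1}$ moves each point only inside its own $G$-orbit, $\bar{\gamma}(t)$ and $\gamma(t)$ lie in the same orbit for every $t$, so $\pi\circ\bar{\gamma}=\pi\circ\gamma$ as curves in $M/G$. As $\gamma^\ast(0)=\gamma_0(0)=\bar{\gamma}(0)$ and $\gamma^\ast(1)=\gamma_1(1)=\bar{\gamma}(1)$, this gives Part (2) at once. The fact I will reuse is that $\bar{\gamma}$ still passes through $x$: from $\gamma(c)=x\in{\rm N}_x$ we get $f_0(x)=eG_x$ and $\delta(eG_x)=e$, so $F^{-1}(x)=(eG_x,x)$ and hence $\bar{\gamma}(c)=x$.

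For Part (1), set $p_0=\pi(\gamma(0))$, $q=\pi(x)$ and $p_1=\pi(\gamma(1))$. Because $\gamma_0$ and $\gamma_1$ meet every orbit orthogonally, their velocities are horizontal, so along them the slice metric $\alpha\vert{\rm N}_x$ (orthogonal projection onto the horizontal space followed by $\alpha$) coincides with the ambient $\alpha$; this is exactly the equality $L_{\hat{\alpha}}(\gamma^\ast)=L_\alpha(\gamma^\ast)$, the orbifold length of the lift $\gamma^\ast$ in the chart $({\rm N}_x,G_x,\pi_x)$ equalling its ambient $\alpha$-length. Using that $\gamma_0,\gamma_1$ are \emph{minimizing} horizontal geodesics and therefore project to minimizing geodesics of $\hat{\alpha}$ (the content of \cite{AKLM}, Proposition 3.1(2)), I obtain
\[
L_\alpha(\gamma^\ast)=L_\alpha(\gamma_0)+L_\alpha(\gamma_1)=d_{\hat{\alpha}}(p_0,q)+d_{\hat{\alpha}}(q,p_1).
\]

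Finally I would bound the right-hand side by $L_\alpha(\gamma)$. By the very construction of $\hat{\alpha}$ in Theorem \ref{indmetr}, the projection $\pi\colon(M,\alpha)\to(M/G,\hat{\alpha})$ is an orbifold Riemannian submersion: $d\pi$ is $\alpha$-orthogonal projection onto the horizontal space followed by an isometry onto the chart tangent space, so $\lvert d\pi(v)\rvert_{\hat{\alpha}}\le\lvert v\rvert_\alpha$ for every tangent vector $v$, whence $L_{\hat{\alpha}}(\pi\circ\gamma)\le L_\alpha(\gamma)$. Since $\pi\circ\gamma$ joins $p_0$ to $p_1$ and passes through $q$ at time $c$, splitting it at $c$ and using that the length of a curve dominates the distance between its endpoints gives $d_{\hat{\alpha}}(p_0,q)+d_{\hat{\alpha}}(q,p_1)\le L_{\hat{\alpha}}(\pi\circ\gamma)$. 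Combining the two displays yields $L_\alpha(\gamma^\ast)\le L_\alpha(\gamma)$, completing Part (1). The hard part is the single geometric input that $\gamma_0$ and $\gamma_1$ genuinely realize the quotient distances $d_{\hat{\alpha}}(p_0,q)$ and $d_{\hat{\alpha}}(q,p_1)$ — that horizontal geodesic segments in the slice remain length-minimizing after projection — for which I would invoke \cite{AKLM}, shrinking $V$ if necessary so the relevant slice geodesics are minimizing; the submersion estimate and the length-bounds-distance step are then routine.
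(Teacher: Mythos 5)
Your construction of $\gamma^\ast$ and your Part (2) match the paper, and two of your three estimates are sound: horizontality of $\gamma_0,\gamma_1$ gives $L_{\hat{\alpha}}(\gamma^\ast)=L_\alpha(\gamma^\ast)$, and since the chart metric on ${\rm N}_x$ is $\alpha$ composed with orthogonal projection onto the horizontal space, the bound $L_{\hat{\alpha}}(\pi\circ\gamma)\le L_\alpha(\gamma)$ is correct. The genuine gap is the step you yourself flag as the hard part: the inequality $L_\alpha(\gamma_0)\le d_{\hat{\alpha}}(p_0,q)$ is not supplied by \cite{AKLM} and cannot be invoked here without circularity. Proposition 3.1 of \cite{AKLM} concerns the orbit space equipped with the quotient metric $\widetilde{d}_{\alpha}(\pi(a),\pi(b))=\inf\{d_\alpha(a,hb)\}$; what your chain of inequalities needs is minimality with respect to $d_{\hat{\alpha}}$, the length metric of the orbifold Riemannian metric $\hat{\alpha}$. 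But the identification $\widetilde{d}_{\alpha}=d_{\hat{\alpha}}$ is exactly Theorem \ref{samametriikka}, which the paper proves \emph{using} Lemma \ref{huom}, and even the single direction you need, $\widetilde{d}_{\alpha}\le d_{\hat{\alpha}}$, is the nontrivial ``horizontal replacement'' half of that proof: a chart lift $\tilde{\sigma}$ of an admissible curve in $M/G$ has $\hat{\alpha}$-length equal to the integral of the $\alpha$-norm of the horizontal component of $\dot{\tilde{\sigma}}$, which can be strictly smaller than its $\alpha$-length, so $d_{\hat{\alpha}}$ cannot be bounded below by upstairs distances until such lifts are replaced by genuinely horizontal curves. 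Shrinking $V$ does not help, because $d_{\hat{\alpha}}$ is an infimum over all admissible curves in $M/G$, not only those staying in the chart.

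The paper avoids this entirely by never descending to the quotient distance. It takes $\gamma_0,\gamma_1$ to be minimal geodesic segments in $(G{\rm N}_x,\alpha)$ and uses the nearest-point property of the slice projection $f$ recorded earlier in Section \ref{quotmetr}: for $y\in{\rm N}_x$ and $g\in G$, the distance from $gy$ to the orbit $Gx$ equals $d_\alpha(y,x)$. Writing $\bar{\gamma}={\rm pr}\circ F^{-1}\circ\gamma$ as in your proposal, this gives $L_\alpha(\gamma_0)=d_\alpha(\bar{\gamma}(0),x)=d_\alpha(\gamma(0),Gx)\le d_\alpha(\gamma(0),x)\le L_\alpha(\gamma\vert_{[0,c]})$, and similarly $L_\alpha(\gamma_1)\le L_\alpha(\gamma\vert_{[c,1]})$; adding the two yields Part (1), with \cite{AKLM} cited only for the fact that geodesics ending at $x$ lie in ${\rm N}_x$ and meet every orbit orthogonally. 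To repair your argument, either replace your two quotient-distance comparisons by these upstairs ones, or first establish $\widetilde{d}_{\alpha}\le d_{\hat{\alpha}}$ independently (by the horizontal replacement construction) and only then conclude that projected horizontal minimal geodesics realize $d_{\hat{\alpha}}$.
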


We point out that for any curve $\gamma$ in
${\rm N}_x$, $L_{\hat{\alpha}}(
\gamma)$ denotes the length of $\gamma$
calculated by using the Riemannian metric $\alpha\vert
{\rm N}_x$ defined before Theorem \ref{indmetr}, while
$L_\alpha(\gamma)$ denotes the length of $\gamma$
calculated by using the  $G_x$-invariant submanifold 
Riemannian metric $\alpha$ induces on ${\rm N}_x$.
If $\gamma$ intersects orthogonally every orbit it meets, then
the two lengths are the same.

Assume $M/G$ is connected. Let $M_0$ be a connected
component of $M$, and let $H$ be the subgroup of $G$
consisting of the elements that map $M_0$ to itself, as in
Lemma \ref{viimeinen}. Let $\alpha$ be a smooth (real analytic)
$G$-invariant Riemannian metric on $M$. 
By restriction, we may consider $\alpha$
as an $H$-invariant Riemannian metric on $M_0$.
Let $d_{\alpha}$ be the $H$-invariant metric 
induced on $M_0$ by $\alpha$. The
metric $d_{\alpha}$ then induces a metric  $\widetilde{d}_{{\alpha}}$ on
$M_0/H\cong M/G$, where 
$$
\widetilde{d}_{{\alpha}}(\pi(x),\pi(y))=\inf \{ d_\alpha(x,hy)\mid h\in H\}.
$$
Let $d_{\hat\alpha}$ be the metric that 
the Riemannian metric $\hat\alpha$ induces on $M/G$. 
We will use Lemma \ref{huom} to prove the following result:

\begin{theorem}
\label{samametriikka}
Let $M$, $G$, $\alpha$ and $\hat{\alpha}$ be as in Theorem \ref{indmetr}.
Assume $M/G$ is connected.
Then $\widetilde{d}_{{\alpha}}=d_{\hat\alpha}$.
\end{theorem}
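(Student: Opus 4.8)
The plan is to prove the two inequalities $d_{\hat\alpha}\le\widetilde{d}_{\alpha}$ and $\widetilde{d}_{\alpha}\le d_{\hat\alpha}$ separately, noting first that both sides are length metrics: $d_{\hat\alpha}$ by definition, and $\widetilde{d}_{\alpha}$ because it is the orbit metric of the isometric $H$-action on the length space $(M_0,d_\alpha)$. Throughout I identify $M/G$ with $M_0/H$ by Lemma \ref{viimeinen} and write $\pi$ for the projection. The geometric input is the local slice description from before Theorem \ref{indmetr}: in a chart $({\rm N}_x,G_x,\pi_x)$ the chart metric $\alpha\vert{\rm N}_x$ is obtained from the submanifold metric by discarding the component tangent to the orbits, so for every curve $\sigma$ in ${\rm N}_x$ one has $L_{\hat\alpha}(\sigma)\le L_\alpha(\sigma)$, with equality exactly when $\sigma$ meets the orbits orthogonally (as recorded after Lemma \ref{huom}).

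For $d_{\hat\alpha}\le\widetilde{d}_{\alpha}$ I would argue that $\pi$ does not increase lengths. Fix $h\in H$ and let $c$ be any piecewise differentiable curve in $M_0$ from $x$ to $hy$. Then $\pi\circ c$ is an admissible curve from $\pi(x)$ to $\pi(y)$, and, read off in slice charts, its $\hat\alpha$-length is computed from the orbit-orthogonal part of $\dot c$; since discarding the orbit-tangent part only shortens vectors, $L_{\hat\alpha}(\pi\circ c)\le L_\alpha(c)$. Taking the infimum over $c$ gives $d_{\hat\alpha}(\pi(x),\pi(y))\le d_\alpha(x,hy)$, and the infimum over $h\in H$ yields $d_{\hat\alpha}\le\widetilde{d}_{\alpha}$.

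The substantial direction is $\widetilde{d}_{\alpha}\le d_{\hat\alpha}$, where I would lift an admissible curve back up to $M_0$ without lengthening it. Given an admissible $\gamma\colon[0,1]\to M/G$ from $\pi(x)$ to $\pi(y)$, decompose $[0,1]$ into subintervals on each of which $\gamma$ lies in a chart $\pi_{x_i}({\rm N}_{x_i})$, refining so that $\gamma$ meets the singular point $\pi(x_i)$ only at finitely many isolated parameters. Starting from $x$, I build a continuous lift $\hat\gamma$ in $M_0$ that meets every orbit orthogonally: over a subarc avoiding the singular point the $G$-action is locally free, so the horizontal lift exists and, being orbit-orthogonal, satisfies $L_\alpha=L_{\hat\alpha}$ there; to pass through a singular point $\pi(x_i)$ I invoke Lemma \ref{huom}, whose curve $\gamma^\ast=\gamma_0\cup\gamma_1$ supplies two orbit-orthogonal geodesic segments in ${\rm N}_{x_i}$ meeting at $x_i$, with matching orbit endpoints and $L_\alpha(\gamma^\ast)=L_{\hat\alpha}(\gamma^\ast)$. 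These local pieces match along orbits and assemble into a continuous orbit-orthogonal $\hat\gamma$ with $L_\alpha(\hat\gamma)=L_{\hat\alpha}(\gamma)$. Since $\hat\gamma$ starts in the component $M_0$ and is continuous, it stays in $M_0$; its endpoint projects to $\pi(y)$, hence lies in $Gy\cap M_0=Hy$, say $\hat\gamma(1)=hy$. Therefore $\widetilde{d}_{\alpha}(\pi(x),\pi(y))\le d_\alpha(x,hy)\le L_\alpha(\hat\gamma)=L_{\hat\alpha}(\gamma)$, and the infimum over $\gamma$ gives $\widetilde{d}_{\alpha}\le d_{\hat\alpha}$.

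The main obstacle is the lifting through the singular points: away from them the orbit space is an honest Riemannian-submersion base and horizontal lifting is routine, but at such a point the base is only locally ${\rm N}_{x_i}/G_{x_i}$, and one must check that the orbit-orthogonal lift passes through continuously with no length penalty. This is precisely the content of Lemma \ref{huom}, so the crux is to patch its local output into one globally defined orbit-orthogonal lift and to verify that the matching of consecutive pieces, which occurs along orbits, neither breaks continuity nor adds length; the endpoint bookkeeping $Gy\cap M_0=Hy$ then closes the argument.
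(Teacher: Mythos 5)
Your first inequality is correct, and is in fact more direct than the paper's own treatment (the paper gets $d_{\hat\alpha}\le\widetilde{d}_{\alpha}$ too by straightening the lifted curve inside each slice via Lemma \ref{huom}): writing the slice lift of $\pi\circ c$ as $\sigma(t)=\delta(f_0(c(t)))^{-1}c(t)$, the term coming from differentiating the group factor is tangent to the orbit, so by $G$-invariance of $\alpha$ the horizontal parts of $\dot\sigma$ and $\dot c$ have equal norms, and $L_{\hat\alpha}(\pi\circ c)\le L_\alpha(c)$ follows. The genuine gap is in the second inequality, exactly at the chart centers. Lemma \ref{huom} bounds $L_\alpha(\gamma^\ast)$ by $L_\alpha(\gamma)$, the \emph{ambient} $\alpha$-length of the upstairs curve fed into it; in your construction that input is the local lift $\mu$ of the arc of $\gamma$ through $\pi(x_i)$, so the lemma gives $L_\alpha(\gamma^\ast)\le L_\alpha(\mu)$. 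What your assembly step needs is $L_\alpha(\gamma^\ast)\le L_{\hat\alpha}(\mu)$, a bound by the chart (i.e.\ downstairs) length; and $L_{\hat\alpha}(\mu)\le L_\alpha(\mu)$ with strict inequality precisely when $\mu$ fails to be orbit-orthogonal, which is exactly the situation the lemma is being invoked to handle. Hence your claimed identity $L_\alpha(\hat\gamma)=L_{\hat\alpha}(\gamma)$ does not follow, the assembled curve may be strictly longer than $L_{\hat\alpha}(\gamma)$, and $\widetilde{d}_{\alpha}\le d_{\hat\alpha}$ is not established. The sentence ``this is precisely the content of Lemma \ref{huom}'' is the misstep: the lemma contains no comparison with the $\hat\alpha$-length of its input.

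The missing inequality is true, but needs an argument you do not give: $L_\alpha(\gamma_0)+L_\alpha(\gamma_1)=d(p_0,Gx_i)+d(p_1,Gx_i)$ by the nearest-point property of the slice, and the distance to the central orbit is a $G$-invariant $1$-Lipschitz function, hence has horizontal gradient, so along any curve it grows at most at the horizontal speed; integrating over the incoming and outgoing half-arcs bounds these two distances by the corresponding $\hat\alpha$-lengths. Alternatively --- and this is what the paper actually does --- the centers need no special treatment at all: since the action is almost free, every orbit has dimension $\dim G$, the vertical distribution has constant rank, and the ODE producing an orbit-companion curve $\mu^\ast$ with $\mu^\ast(t)\in G\mu(t)$ and $\dot\mu^\ast$ horizontal is solvable through the centers as well; then $L_\alpha(\mu^\ast)=L_{\hat\alpha}(\mu)$ is exactly the right comparison and your patching argument goes through verbatim. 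Adopting this also removes your preliminary reduction ``refine so that $\gamma$ meets $\pi(x_i)$ only at finitely many isolated parameters,'' which is itself not automatic: an admissible curve can hit a center infinitely often, or run entirely inside the singular stratum.
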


\begin{proof} By Lemma \ref{viimeinen}, we may without
loss of generality assume that
$M$ is connected.
Let $x, y\in M$. We will show that $\widetilde{d}_{{\alpha}}
(\pi(x),\pi(y))=d_{\hat\alpha}(\pi(x),\pi(y))$.  Let $\gamma\colon
[0,1]\to M/G$ be a curve such that $\gamma(0)=\pi(x)$ and
$\gamma(1)=\pi(y)$. We may assume that $\gamma$ is admissible. 
Let $\widetilde{\gamma}\colon [0,1]\to M$ be a lift of $\gamma$.
Decompose the interval $[0,1]$ into finitely many subintervals
$[t_i,t_{i+1}]$, $1\leq i\leq m$, such that  $\widetilde{\gamma}( [t_i, t_{i+1}])$
 is contained in a small neighbourhood $V_i\cong
 U_i\times {\rm N}_{x_i}$ of
 ${\rm N}_{x_i}$, as before Lemma \ref{huom}, where
${\rm N}_{x_i}$ is a linear slice at $x_i\in \widetilde{\gamma}([t_i,t_{i+1}])$.
We may assume that $x_1=\widetilde{\gamma}(0)$ and
$x_m=\widetilde{\gamma}(1)$.
For every $1<i<m$, let $c_i\in (t_i,t_{i+1})$ be such that
$\widetilde{\gamma}(c_i)=x_i$. Let
$\widetilde{\gamma}_i$ denote the restriction of $\widetilde{\gamma}$
to $[t_i,t_{i+1}]$,  for every $i$. 
By Lemma \ref{huom}, we may replace every curve
$\widetilde{\gamma}_i$ by a curve $\widetilde{\gamma}^\ast_i\colon
[t_i,t_{i+1}]\to {\rm N}_{x_i}$ having the properties that
$\pi(\widetilde{\gamma}^\ast_i(t_i))=\pi(\widetilde{\gamma}_i(t_i))$, 
$\pi(\widetilde{\gamma}^\ast_i(t_{i+1}))=\pi(\widetilde{\gamma}_i(t_{i+1}))$ and
$$
L_{\hat{\alpha}}(\widetilde{\gamma}^\ast_i)
=L_\alpha(\widetilde{\gamma}^\ast_i)
\leq L_\alpha(\widetilde{\gamma}_i).
$$
We next show  that the   $\widetilde{\gamma}^\ast _i$ 
can be chosen in such a way that they define a
curve $\widetilde{\gamma}^\ast\colon [0,1]\to M$, 
where $\pi(\widetilde{\gamma}^\ast(0))=\pi(x)$ and
$\pi(\widetilde{\gamma}^\ast(1))=\pi(y)$.
For example,
$\pi(\widetilde{\gamma}^\ast_1(t_2))=\pi(\widetilde{\gamma}^\ast_2(t_2))$,
$\widetilde{\gamma}^\ast_1(t_2)\in {\rm N}_{x_1}$ and
$\widetilde{\gamma}^\ast_2(t_2)=g\widetilde{\gamma}^\ast_1(t_2)$, for
some $g\in G$. Thus, if $\widetilde{\gamma}^\ast_1(t_2)\not=
\widetilde{\gamma}^\ast_2(t_2)$, we can replace $\widetilde{\gamma}^\ast_2$
by  $g^{-1}\circ \widetilde{\gamma}^\ast_2$.
Continuing like this,
we can replace every  $\widetilde{\gamma}^\ast_i$, if necessary, 
in such a way that we obtain a curve $\widetilde{\gamma}^\ast\colon
[0,1]\to M$. The curve $\widetilde{\gamma}^\ast$ induces  a curve
$\gamma^\ast\colon [0,1]\to M/G$ with $\gamma^\ast(0)=\pi(x)$ and
$\gamma^\ast(1)=\pi(y)$. It follows from the way $\gamma^\ast$ was
constructed  that
$$
L_{\hat\alpha}(\gamma^\ast)=
L_{\alpha}(\widetilde{\gamma}^\ast)
\leq L_{\alpha}(\widetilde{\gamma}).
$$
Since $\gamma$ was an arbitrary path from
$\pi(x)$ to $\pi(y)$, it follows that
$$
d_{\hat\alpha}(\pi(x),\pi(y))\leq
\widetilde{d}_\alpha(\pi(x),\pi(y)).
$$

Let  then $z\in M$ and let
${\rm N}_z$ be a linear slice at $z$. Let $\mu\colon [0,1]
\to {\rm N}_z$ be a curve. We may assume that $\mu$
is simple, starts at  $z$ and intersects each orbit at most once.
If $\mu([0,1])$ is orthogonal to every
orbit it meets, then $L_{\alpha}(\mu)=L_{\hat{\alpha}}(\mu)$. 
If $\mu([0,1])$ is not  orthogonal to every orbit it meets,
then we may replace $\mu$ by a curve
 $\mu^\ast\colon [0,1]
\to G{\rm N}_z$ with
$\mu^\ast(t)\in G\mu(t)$, for every $t\in [0,1]$, such that
$\mu^\ast([0,1])$ is orthogonal to every orbit it meets. 
Then 
$$
L_{\hat{\alpha}}(\mu)=
L_{{\alpha}}(\mu^\ast)\geq \widetilde{d}_{\alpha}(\pi(\mu(0)),\pi(\mu(1))).
$$
Replacing local lifts of any path from $\pi(x)$ to $\pi(y)$ in this manner
and gluing them at the endpoints shows that
$$
\widetilde{d}_\alpha(\pi(x),\pi(y))
\leq
d_{\hat\alpha}(\pi(x),\pi(y)).
$$

\end{proof}

According to Lemma 2.4 in 
\cite{Ka},  the metric $\widetilde{d}_{\alpha}$ on $M/G\cong M_0/H$
is complete if and only if  the $H$-invariant metric $d_{{\alpha}}$ 
on $M_0$ is complete.
Since, by Theorem \ref{samametriikka}, $\widetilde{d}_{{\alpha}}=d_{\hat\alpha}$, it
follows that $d_{\hat{\alpha}}$ is complete if and only if $d_{\alpha}$ is complete.
We conclude:

\begin{cor}
\label{korollaari}
Let $G$ be a Lie group and let $M$ be a smooth (real analytic) manifold
on which $G$ acts by a proper, smooth (real analytic) almost free
action. Let $\alpha$ be a $G$-invariant smooth (real analytic)
Riemannian metric on $M$ and let $\hat{\alpha}$ be the smooth
(real analytic) Riemannian metric that $\alpha$ induces on $M/G$.
Then $\hat{\alpha}$ is complete if and only if $\alpha$ is complete.
\end{cor}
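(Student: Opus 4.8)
The plan is to transfer the completeness question from the quotient orbifold $M/G$ back to the manifold $M$ by means of the distance-function identity of Theorem \ref{samametriikka}, and then to invoke the equivariant completeness criterion from \cite{Ka}. The first step is to reduce to the connected case, since Theorem \ref{samametriikka} requires $M/G$ to be connected while the corollary does not. Completeness of a metric is a component-by-component property: a Cauchy sequence eventually remains in a single connected component, so $d_{\hat\alpha}$ is complete on $M/G$ precisely when it is complete on each connected component, and likewise $d_\alpha$ is complete on $M$ precisely when it is complete on each connected component of $M$. Because $G$ acts by $\alpha$-isometries, completeness of a component of $M$ depends only on the $G$-orbit of that component, and these orbits of components correspond bijectively to the connected components of $M/G$. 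Hence it suffices to fix one connected component of $M/G$ at a time; having done so, Lemma \ref{viimeinen} lets me replace $(M,G,\alpha)$ by $(M_0,H,\alpha\vert M_0)$, where $M_0$ is a connected component of $M$ and $H$ its stabilizer, without changing either the induced orbifold $M/G\cong M_0/H$ or the induced Riemannian metric $\hat\alpha$. We are then squarely in the hypotheses of Theorem \ref{samametriikka}, with $M_0/H$ connected.

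Next I would record two completeness equivalences and chain them. On the one hand, Theorem \ref{samametriikka} gives the equality of metrics $\widetilde{d}_\alpha = d_{\hat\alpha}$ on $M_0/H\cong M/G$, so $d_{\hat\alpha}$ is complete if and only if the quotient metric $\widetilde{d}_\alpha$ is complete. On the other hand, Lemma 2.4 of \cite{Ka} asserts that the quotient metric $\widetilde{d}_\alpha$ on $M_0/H$ is complete if and only if the $H$-invariant distance $d_\alpha$ on $M_0$ is complete. Combining these yields that $d_{\hat\alpha}$ is complete if and only if $d_\alpha$ is complete on $M_0$, that is, if and only if $\alpha$ is complete on $M_0$.

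It remains to unwind the reduction: since the displayed equivalence holds on each connected component of $M/G$, and since completeness on $M$ is the conjunction of completeness on all the $M_0$, we conclude that $\hat\alpha$ is complete if and only if $\alpha$ is complete. The conceptual obstacle lies entirely in passing between a genuinely metric-space statement (completeness of a distance function) on the quotient and on the total space; this is exactly what is supplied by the identity $\widetilde{d}_\alpha = d_{\hat\alpha}$ of Theorem \ref{samametriikka} together with the equivariant criterion of \cite{Ka}. Once those are in hand the corollary is a formal consequence, the only care needed being the bookkeeping for possibly disconnected $M$ carried out in the reduction step.
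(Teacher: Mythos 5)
Your proposal is correct and follows essentially the same route as the paper: both proofs chain the identity $\widetilde{d}_\alpha = d_{\hat\alpha}$ of Theorem \ref{samametriikka} with the equivalence of Lemma 2.4 in \cite{Ka} (completeness of $\widetilde{d}_\alpha$ on $M_0/H$ versus completeness of $d_\alpha$ on $M_0$), using Lemma \ref{viimeinen} to pass to a connected component. The only difference is that you spell out the component-by-component bookkeeping for disconnected $M/G$, which the paper leaves implicit; that added care is sound and harmless.
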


\section{Comparing Riemannian metrics on $X$ and $X_{\rm red}$}
\label{compX}

\noindent Let $X$ be a smooth (real analytic) orbifold. 
Assume $X$ is not reduced. Replacing
every orbifold chart $(\widetilde{U}, G,\varphi)$ by a chart
$(\widetilde{U}, G/{\rm ker}(G), \varphi)$ yields a smooth (real analytic)
reduced orbifold $X_{\rm red}$. The orbifolds $X$ and $X_{\rm red}$
are identical as topological spaces and
the identity map $X\to X_{\rm red}$ is an orbifold map. 
Let $(\widetilde{U}, G, \varphi)$ be an orbifold chart
of $X$. Then a Riemannian metric on
$\widetilde{U}$ is invariant
under the action of $G$ if and only if it is invariant under the
action of $G/{\rm ker}(G)$. 
The following proposition follows immediately from Definition 
\ref{riemmetriikka}:

\begin{prop}
\label{one-to-one}
There is a one-to-one correspondence between Riemannian metrics on
$X$ and Riemannian metrics on $X_{\rm red}$. A Riemannian
metric $\alpha$ on $X$ is smooth (real analytic) if and only if the
corresponding Riemannian metric $\alpha_{\rm red}$ on $X_{\rm red}$ is
smooth (real analytic).
\end{prop}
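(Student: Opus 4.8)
The plan is to notice that Definition \ref{riemmetriikka} refers only to the underlying open sets $\widetilde{U}$, to the equivariant maps $\lambda$ of the embeddings, and to the requirement that the acting group preserve $\alpha^{\widetilde{U}}$ --- and that none of these data distinguishes the group $G$ from its quotient $G/{\rm ker}(G)$. Accordingly, I would take the correspondence to be the identity on families of inner products: to a Riemannian metric $\alpha=\{\alpha^{\widetilde{U}}\}$ on $X$ I assign the very same family $\{\alpha^{\widetilde{U}}\}$, now regarded as data attached to the charts $(\widetilde{U}, G/{\rm ker}(G),\varphi)$ of $X_{\rm red}$, and call it $\alpha_{\rm red}$. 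Since each chart of $X_{\rm red}$ is obtained in exactly this way from a chart of $X$ with the same underlying set $\widetilde{U}$, this assignment is manifestly a bijection at the level of families, and I would establish the proposition by checking that $\alpha$ satisfies the two conditions of Definition \ref{riemmetriikka} on $X$ if and only if $\alpha_{\rm red}$ satisfies them on $X_{\rm red}$.

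For Part 1 I would invoke the observation recorded just before the statement: a Riemannian metric on $\widetilde{U}$ is invariant under $G$ if and only if it is invariant under $G/{\rm ker}(G)$, because ${\rm ker}(G)$ acts trivially on $\widetilde{U}$. For Part 2, the point is that passing from $X$ to $X_{\rm red}$ replaces each embedding $(\lambda,\theta)$ of Part 2 of Definition \ref{orbichartdef} by $(\lambda,\bar\theta)$, where $\bar\theta\colon G/{\rm ker}(G)\to H/{\rm ker}(H)$ is the induced homomorphism while the equivariant map $\lambda$ itself is unchanged; hence the requirement that the embeddings $\widetilde{W}\to\widetilde{U}$ and $\widetilde{W}\to\widetilde{V}$ be isometries is literally the same condition for $X$ as for $X_{\rm red}$. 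The smoothness (real analyticity) assertion is then immediate from the identification of families: $\alpha$ is smooth (real analytic) precisely when every $\alpha^{\widetilde{U}}$ is, and $\alpha_{\rm red}$ is assembled from the identical family, so the two properties hold simultaneously.

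I expect no genuine obstacle here; the content of the proposition is essentially bookkeeping. The only step requiring any care is the verification that the atlas of $X_{\rm red}$ is produced from that of $X$ chart by chart without altering either the underlying sets $\widetilde{U}$ or the equivariant maps $\lambda$ --- that is, that reducing the groups does not force a change of charts or of gluing maps. This is exactly how $X_{\rm red}$ was constructed at the beginning of the section, so once that is made explicit the remaining implications follow directly from Definition \ref{riemmetriikka}, just as the text claims.
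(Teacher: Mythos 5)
Your proposal is correct and takes essentially the same approach as the paper: the paper records exactly the observation you invoke (a Riemannian metric on $\widetilde{U}$ is $G$-invariant if and only if it is $G/{\rm ker}(G)$-invariant, with the charts of $X_{\rm red}$ sharing the same underlying sets and embedding maps) and then declares that the proposition follows immediately from Definition \ref{riemmetriikka}. You have merely written out explicitly the chart-by-chart bookkeeping that the paper leaves implicit.
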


\begin{remark}
\label{huomautus} 
Assume $X$ is connected.
Let $d_\alpha$ and $d_{\alpha_{\rm  red}}$ be the metrics induced on
$X$ by $\alpha$ and on $X_{\rm red}$ by $\alpha_{\rm red}$, respectively.
If we just consider $X$ and $X_{\rm red}$ as topological spaces, i.e.,
if we identify $X_{\rm red}$ with $X$, then both $d_\alpha$ and $d_{\alpha_{\rm red}}$ 
are metrics on $X$ and $d_\alpha=d_{\alpha_{\rm red}}$.  In particular,
this implies that $\alpha$ is complete 
if and only if $\alpha_{\rm red}$ is complete.
\end{remark}

\section{Real analytic Riemannian metric}
\label{real}

\noindent In this section we show that every real analytic orbifold has a
real analytic Riemannian metric. In order to do that, we first need to construct the
{\it frame bundle} ${\rm Fr}(X)$ of a reduced real analytic orbifold $X$.
The construction is similar to that in the smooth case.
For details, see \cite{MM}, pp. 42--43.

Recall that, for an $n$-dimensional real analytic manifold, the frame bundle
${\rm Fr}(M)$ is a real analytic fibre bundle over $M$, the fibre of $x\in M$
is the manifold of all ordered bases of the tangent space ${\rm T}_x(M)$.
The frame bundle ${\rm Fr}(M)$ admits a canonical right action of the general
linear group ${\rm GL}_n({\mathbb{R}})$ which makes it a principal
${\rm GL}_n({\mathbb{R}})$-bundle over $M$.

For a reduced $n$-dimensional real analytic orbifold $X$, we first form the frame bundles
${\rm Fr}(\widetilde{U}_i)$ corresponding to orbifold charts $(\widetilde{U}_i,
G_i,\varphi_i)$. The action of $G_i$ on $\widetilde{U}_i$ induces a left
action on ${\rm Fr}(\widetilde{U}_i)$:
$$
G_i\times {\rm Fr}(\widetilde{U}_i)\to {\rm Fr}(\widetilde{U}_i),\,\,\,
(g,(x, B_x))\mapsto (gx,(dg)_x(B_x)).
$$
Since $G_i$ acts effectively on $\widetilde{U}_i$, it follows that
the action of $G_i$ on ${\rm Fr}(\widetilde{U}_i)$ is free. The group
${\rm GL}_n({\mathbb{R}})$ acts on ${\rm Fr}(\widetilde{U}_i)$ from the
right and the action commutes with the action of $G_i$.  Thus
${\rm Fr}(\widetilde{U}_i)/G_i$ is a real analytic manifold on which
${\rm GL}_n({\mathbb{R}})$ acts real analytically.
In fact, we can consider ${\rm Fr}(\widetilde{U}_i)/G_i$ as a twisted product
${\widetilde{U}}_i\times_{G_i}{\rm GL}_n({\mathbb{R}})$. It now follows from
Lemma 0.1 in \cite{IK}, that ${\rm GL}_n({\mathbb{R}})$ acts
properly on ${\rm Fr}(\widetilde{U}_i)/G_i$.

Assume $A\in {\rm GL}_n({\mathbb{R}})$ and
$[x,I]A=[x,I]$. Then
$(x, A)=(gx,(dg)_x)$, for some $g\in G_i$. Thus
$g\in (G_i)_x$ and $A=(dg)_x$. It follows that the isotropy subgroups
of the ${\rm GL}_n({\mathbb{R}})$-action are finite, i.e., 
${\rm GL}_n({\mathbb{R}})$ acts almost freely on ${\rm Fr}(\widetilde{U}_i)/G_i$.

The frame bundle ${\rm Fr}(X)$ of $X$ can  be constructed by
gluing together the quotients
${\rm Fr}(\widetilde{U}_i)/G_i$. 
This is done by using the gluing maps induced by the embeddings 
$\lambda_{ij}\colon \widetilde{U}_i\to\widetilde{U}_j$ between orbifold charts.
We obtain:

\begin{theorem}
\label{apuframe}
Let $X$ be a reduced $n$-dimensional real analytic orbifold. Then the frame
bundle ${\rm Fr}(X)$ of $X$ is a real analytic manifold on which ${\rm GL}_n({\mathbb{R}})$
acts by a proper, real analytic, 
effective, almost free action. 
The orbifolds $X$ and ${\rm Fr}(X)/{\rm GL}_n({\mathbb{R}})$ are 
real analytically diffeomorphic.
\end{theorem}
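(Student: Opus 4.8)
The plan is to package the local analysis already carried out in this section into a global statement, so that the only genuinely new work is the gluing and the verification that the assembled global action retains its properties. The discussion preceding the theorem shows that for each chart $(\widetilde{U}_i,G_i,\varphi_i)$ the quotient $W_i:={\rm Fr}(\widetilde{U}_i)/G_i\cong\widetilde{U}_i\times_{G_i}{\rm GL}_n({\mathbb{R}})$ is a real analytic manifold on which ${\rm GL}_n({\mathbb{R}})$ acts properly (Lemma 0.1 of \cite{IK}), real analytically, effectively and almost freely. I would take these facts as the starting point.

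First I would treat the gluing. Each embedding $\lambda_{ij}\colon\widetilde{U}_i\to\widetilde{U}_j$ induces through its differential a real analytic ${\rm GL}_n({\mathbb{R}})$-equivariant open embedding ${\rm Fr}(\widetilde{U}_i)\to{\rm Fr}(\widetilde{U}_j)$, which descends to a real analytic ${\rm GL}_n({\mathbb{R}})$-equivariant open embedding $W_i\to W_j$. The one delicate point is well-definedness: an embedding between orbifold charts is determined only up to the action of $G_j$, and two choices differ by an element of $G_j$; this ambiguity is precisely what disappears upon passing to the quotient by $G_j$, so the induced transition maps on the $W_i$ are canonical and satisfy the cocycle condition on triple overlaps. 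I would then follow the construction of \cite{MM}, pp. 42--43, observing that the only change from the smooth case is that all maps are now real analytic, to obtain a real analytic manifold ${\rm Fr}(X)$. Because the transition maps are equivariant, the local actions assemble into a global real analytic ${\rm GL}_n({\mathbb{R}})$-action restricting to the local one on each $W_i$; effectiveness and almost freeness, being conditions on the isotropy subgroups, are then inherited locally.

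The hard part will be properness, since this is the one property that is genuinely global and cannot simply be read off the local models. My plan is to verify the sequential criterion directly. Suppose $p_m\to p$ in ${\rm Fr}(X)$ and $A_m p_m\to q$ for some sequence $A_m\in{\rm GL}_n({\mathbb{R}})$. Write $\pi\colon{\rm Fr}(X)\to{\rm Fr}(X)/{\rm GL}_n({\mathbb{R}})$ for the orbit projection; its target is, by construction of the gluing, the Hausdorff space $X$. Continuity of $\pi$, the equality $\pi(p_m)=\pi(A_m p_m)$, and Hausdorffness together imply $\pi(p)=\pi(q)$. Hence $p$ and $q$ lie over a common point of $X$, so both lie in a single invariant chart $W_i$, which is the full $\pi$-preimage of an open subset of $X$. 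Since $W_i$ is open and ${\rm GL}_n({\mathbb{R}})$-invariant, $p_m$ and $A_m p_m$ belong to $W_i$ for all large $m$, and properness of the action on $W_i$ yields a convergent subsequence of the $A_m$. This establishes properness of the global action.

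Finally I would deduce the diffeomorphism. For each $i$ one has the real analytic identification $W_i/{\rm GL}_n({\mathbb{R}})\cong(\widetilde{U}_i\times_{G_i}{\rm GL}_n({\mathbb{R}}))/{\rm GL}_n({\mathbb{R}})\cong\widetilde{U}_i/G_i=U_i$, and these identifications are compatible with the gluing maps on both sides. They therefore assemble into a real analytic diffeomorphism $X\cong{\rm Fr}(X)/{\rm GL}_n({\mathbb{R}})$, which completes the plan.
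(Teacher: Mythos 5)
Your proposal is correct and follows essentially the same route as the paper: form the local frame bundles ${\rm Fr}(\widetilde{U}_i)/G_i\cong\widetilde{U}_i\times_{G_i}{\rm GL}_n({\mathbb{R}})$, get local properness from Lemma 0.1 of \cite{IK} and almost freeness from the isotropy computation, glue along the maps induced by chart embeddings as in \cite{MM}, and identify the orbit space with $X$ chart by chart. The main difference is that you spell out two points the paper leaves implicit --- the well-definedness of the gluing maps modulo the $G_j$-ambiguity of chart embeddings, and the passage from local to global properness via the sequential criterion --- which strengthens rather than changes the argument.
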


We are now ready to prove Theorems \ref{realanalcase}
and \ref{Orthonframe}:

\medskip

\noindent {\it Proof of Theorem \ref{realanalcase}.}
Let us first assume that $X$ is a reduced  $n$-dimensional 
real analytic orbifold. By Theorem
\ref{apuframe},  $X\cong{\rm Fr}(X)/{\rm GL}_n({\mathbb{R}})$.
Since ${\rm GL}_n({\mathbb{R}})$ acts properly and real analytically on
${\rm Fr}(X)$, it follows from Theorem I in \cite{IK}, that ${\rm Fr}(X)$
has a real analytic ${\rm GL}_n({\mathbb{R}})$-invariant Riemannian
metric ${\alpha}$. But then, by Theorem \ref{indmetr},  ${\alpha}$ induces  a real
analytic Riemannian metric on $X$.

Let then $X$ be any real analytic orbifold, and let $X_{\rm red}$ be the
corresponding reduced orbifold. By the first part of the proof, we know that
$X_{\rm red}$ has a real analytic Riemannian metric. 
It now follows from
Proposition \ref{one-to-one}, that also $X$ has a real analytic Riemannian metric.
\qed

\medskip

\noindent{\it Proof of Theorem  \ref{Orthonframe}.}
Let $X$ be a reduced $n$-dimensional real analytic orbifold. Since 
$X$ has a real analytic Riemannian metric, by Theorem \ref{realanalcase},
we can  construct the orthonormal
frame bundle ${\rm OFr}(X)$ of $X$ (denoted by ${\rm Fr}(X)$ in \cite{ALR}), 
exactly as in the smooth case,
see pp. 11 - 12 in \cite{ALR}. The proof is now
similar to the proof of the smooth case (\cite{ALR}, Theorem 1.23).
\qed

\medskip

The following result is well-known, see Proposition 2.1 in
\cite{SU} for the smooth case. The proof of the real analytic case is
similar.

\begin{prop}
\label{seuraus}
Let $X$ be a reduced  $n$-dimensional smooth (real analytic) orbifold
and let ${\rm OFr}(X)$ be the orthonormal frame bundle of $X$. Let
$\beta$ be a smooth (real analytic) Riemannian metric on
${\rm OFr}(X)/{\rm O}(n)$. Then there is an ${\rm O}(n)$-invariant smooth
(real analytic) Riemannian metric $\alpha$ on ${\rm OFr}(X)$
such that $\beta$ equals the Riemannian metric $\hat{\alpha}$
induced on ${\rm OFr}(X)/{\rm O}(n)$ by $\alpha$.
\end{prop}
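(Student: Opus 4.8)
The plan is to build $\alpha$ by splitting the tangent bundle of $M={\rm OFr}(X)$ into a vertical part tangent to the ${\rm O}(n)$-orbits and a horizontal complement, declaring these orthogonal, prescribing the horizontal part to be the pullback of $\beta$, and leaving the vertical part essentially arbitrary. The virtue of this construction is that it is entirely intrinsic, so it yields a global real analytic (smooth) metric with no recourse to a partition of unity. This is exactly what the real analytic case demands, since real analytic partitions of unity are unavailable; the smooth argument of \cite{SU} and this one then differ only in how the horizontal distribution is produced.

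First I would fix, by Theorem I of \cite{IK} applied to the proper real analytic (smooth) action of ${\rm O}(n)$ on ${\rm OFr}(X)$, an ${\rm O}(n)$-invariant real analytic (smooth) Riemannian metric $\alpha_0$ on $M$. Writing $\pi\colon M\to M/{\rm O}(n)$ for the orbit map, let $V=\ker d\pi$ be the vertical distribution tangent to the orbits and let $H=V^{\perp_{\alpha_0}}$ be its $\alpha_0$-orthogonal complement. Because the action is almost free, each orbit has dimension $\dim{\rm O}(n)$, so $\dim H=\dim(M/{\rm O}(n))$ and $d\pi$ restricts to an isomorphism $H_z\to {\rm T}_{\pi(z)}(M/{\rm O}(n))$ at every $z\in M$; both $V$ and $H$ are ${\rm O}(n)$-invariant real analytic (smooth) distributions.

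Next I would define $\alpha$ by
$$
\alpha(u,v)=\beta(d\pi\,u,d\pi\,v)+\alpha_0(P_Vu,P_Vv),
$$
where $P_V$ is the $\alpha_0$-orthogonal projection onto $V$. The first term is positive definite on $H$ and vanishes on $V$, the second is positive definite on $V$ and vanishes on $H$, and a direct check shows the cross terms vanish; hence $\alpha$ is a Riemannian metric with $H\perp_\alpha V$. It is ${\rm O}(n)$-invariant because every ingredient is (the distributions $H,V$, the metric $\alpha_0|_V$, and $\beta$ through its pullback), and it is real analytic (smooth) because $\alpha_0$ is, because $\beta$ is real analytic (smooth) on the quotient, and because $\pi$ is represented on slice charts $G{\rm N}_x\cong G\times_{G_x}{\rm N}_x\to {\rm N}_x/G_x$ by the real analytic (smooth) projection $G\times {\rm N}_x\to{\rm N}_x$, along which the chart representative of $\beta$ pulls back real analytically (smoothly).

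Finally I would verify $\hat\alpha=\beta$ against the slice description of $\hat\alpha$ in Theorem \ref{indmetr}. By construction the $\alpha$-orthogonal complement of $V$ is again $H$, so a linear slice ${\rm N}_x$ built from $\alpha$ satisfies ${\rm T}_z{\rm N}_z=H_z$ for $z\in{\rm N}_x$. The induced slice metric assigns to $v\in{\rm T}_z{\rm N}_x$ the value $\alpha(v^H,v^H)=\beta(d\pi\,v,d\pi\,v)$, which is precisely the chart representative of $\beta$; hence $\hat\alpha=\beta$. The main obstacle is not any individual computation but the requirement that the whole construction be simultaneously global and real analytic: this is what forces the use of the invariant metric from \cite{IK} to supply the horizontal distribution $H$, in place of the connection one would otherwise assemble from a partition of unity.
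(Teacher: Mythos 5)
Your construction is correct in substance, and it is essentially the Riemannian-submersion argument that the paper itself does not write out but outsources to Proposition 2.1 of \cite{SU} (with the remark that the real analytic case is similar). The only real difference of route is how the horizontal distribution is produced: the standard proof for an orthonormal frame bundle uses the canonical (Levi-Civita) connection of the metric on the base, whereas you take $H=V^{\perp_{\alpha_0}}$ for an auxiliary invariant metric $\alpha_0$ supplied by Theorem I of \cite{IK}. Your choice is actually well adapted to the statement being proved: here ${\rm OFr}(X)$ is the orthonormal frame bundle of some fixed metric on $X$, while $\beta$ is an \emph{arbitrary} metric on the quotient, so the canonical connection attached to $\beta$ lives a priori on a different frame bundle and one would need an equivariant (Gram--Schmidt) identification to transport it back; your argument avoids that step, and it uses no partitions of unity, as the real analytic case demands.

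One step needs tightening. The displayed formula $\alpha(u,v)=\beta(d\pi\,u,d\pi\,v)+\alpha_0(P_Vu,P_Vv)$ treats $\beta$ as a metric on a manifold, but $M/{\rm O}(n)$ is an orbifold and $\beta$ is only a compatible family of $G_x$-invariant metrics $\beta^{{\rm N}_x}$ on slice charts; at points with nontrivial local group there is no tangent space of the quotient for $d\pi$ to land in. Your slice-chart justification is also not literally correct as stated: the projection $G\times {\rm N}_x\to {\rm N}_x$ does \emph{not} descend to a map $G\times_{G_x}{\rm N}_x\to {\rm N}_x$ (only the composite to ${\rm N}_x/G_x$ descends), so $\pi$ is not ``represented'' by that projection. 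What is true, and what rescues the argument, is that the \emph{pullback tensor} ${\rm pr}^\ast\beta^{{\rm N}_x}$ on $G\times{\rm N}_x$ is invariant under the twisted $G_x$-action precisely because $\beta^{{\rm N}_x}$ is $G_x$-invariant, hence it descends to a smooth (real analytic) ${\rm O}(n)$-invariant positive semidefinite tensor on the tube $G{\rm N}_x$ whose kernel is the vertical distribution; this descended tensor is the local meaning of $\pi^\ast\beta$. You must then also check that the tensors obtained from different slices agree on overlaps of tubes, which is exactly where condition (2) of Definition \ref{riemmetriikka} (the chart embeddings are isometries for $\beta$) enters. With these two points made explicit, the rest of your proof, including the verification of $\hat\alpha=\beta$ against the slice description of the induced metric in Theorem \ref{indmetr}, goes through as you wrote it.
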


Theorem \ref{Orthonframe}, Corollary \ref{korollaari}, Proposition \ref{seuraus},
Proposition \ref{one-to-one} and the remark after it
imply the following correspondence:

\begin{cor}
\label{vastaavuus}
Let $X$ be an $n$-dimensional smooth (real analytic) orbifold, and let
$X_{\rm red}$ be the reduced orbifold corresponding to $X$. Then
every smooth (real analytic) Riemannian metric on $X$ is induced by an
${\rm O}(n)$-invariant smooth (real analytic) Riemannian metric on
${\rm OFr}(X_{\rm red})$. Conversely, any ${\rm O}(n)$-invariant
smooth (real analytic) Riemannian metric on ${\rm OFr}(X_{\rm red})$
induces a smooth (real analytic) Riemannian metric on $X$.  A
Riemannian metric on $X$ is complete if and only if it is induced
by a complete ${\rm O}(n)$-invariant Riemannian metric
on ${\rm OFr}(X_{\rm red})$.
\end{cor}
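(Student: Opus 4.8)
The plan is to establish this correspondence purely by composing results already proved, routing every statement through the reduced orbifold $X_{\rm red}$ and through the diffeomorphism $X_{\rm red}\cong {\rm OFr}(X_{\rm red})/{\rm O}(n)$ of Theorem \ref{Orthonframe}. No new geometric input is needed; all the content lies in keeping the various identifications compatible.

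First I would treat the forward direction. Given a smooth (real analytic) Riemannian metric $\gamma$ on $X$, Proposition \ref{one-to-one} produces a smooth (real analytic) Riemannian metric $\gamma_{\rm red}$ on $X_{\rm red}$. Via the diffeomorphism of Theorem \ref{Orthonframe} I regard $\gamma_{\rm red}$ as a metric on ${\rm OFr}(X_{\rm red})/{\rm O}(n)$. Proposition \ref{seuraus}, applied to the reduced orbifold $X_{\rm red}$ with $\beta=\gamma_{\rm red}$, then supplies an ${\rm O}(n)$-invariant smooth (real analytic) Riemannian metric $\alpha$ on ${\rm OFr}(X_{\rm red})$ whose induced metric $\hat{\alpha}$ equals $\gamma_{\rm red}$. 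Tracing the identifications backward, $\alpha$ induces $\gamma$ on $X$, which gives the first assertion.

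For the converse I would run the same chain in reverse: starting from an ${\rm O}(n)$-invariant metric $\alpha$ on ${\rm OFr}(X_{\rm red})$, Theorem \ref{indmetr} yields the induced metric $\hat{\alpha}$ on ${\rm OFr}(X_{\rm red})/{\rm O}(n)\cong X_{\rm red}$, and Proposition \ref{one-to-one} transports it to a metric on $X$. The completeness statement follows by combining the two completeness criteria already at hand: Remark \ref{huomautus} gives that $\gamma$ is complete if and only if $\gamma_{\rm red}=\hat{\alpha}$ is complete, while Corollary \ref{korollaari} gives that $\hat{\alpha}$ is complete if and only if $\alpha$ is complete. Chaining these equivalences shows that $\gamma$ is complete precisely when the ${\rm O}(n)$-invariant metric $\alpha$ inducing it is complete.

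The one point requiring care, and the step I would single out as the main obstacle, is verifying that these identifications genuinely compose to the identity on metrics: the metric recovered on $X$ at the end of the forward chain should be the $\gamma$ we started with, not merely some metric inducing the same distance function. This reduces to checking that the bijection of Proposition \ref{one-to-one} and the inducing construction of Theorem \ref{indmetr} are mutually inverse under the diffeomorphism of Theorem \ref{Orthonframe}. Since Proposition \ref{seuraus} yields an $\alpha$ with $\hat{\alpha}=\gamma_{\rm red}$ exactly, and Proposition \ref{one-to-one} is a bijection of Riemannian metrics rather than of distances, the composition does recover $\gamma$ on the nose once the correspondences are matched up chartwise; but this is the place where the bookkeeping must be carried out honestly.
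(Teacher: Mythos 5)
Your proof is correct and follows essentially the same route as the paper, which derives this corollary by exactly the same chain: Proposition \ref{one-to-one} and Remark \ref{huomautus} to pass between $X$ and $X_{\rm red}$, Theorem \ref{Orthonframe} to identify $X_{\rm red}$ with ${\rm OFr}(X_{\rm red})/{\rm O}(n)$, Proposition \ref{seuraus} and Theorem \ref{indmetr} for the two directions of the correspondence, and Corollary \ref{korollaari} for the completeness equivalence. Your attention to the compatibility of the identifications is a reasonable extra care, but no further argument is needed beyond what these cited results provide.
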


\section{Complete Riemannian metric}
\label{complete} 

\noindent Recall that two smooth (real analytic) Riemannian metrics 
$\alpha_1$ and $\alpha_2$ on a smooth 
(real analytic) orbifold
$X$ are called {\it conformal}, if there exists a 
smooth (real analytic) orbifold map
$\omega\colon X\to {\mathbb{R}}$ such that $\omega(x)>0$ for every
$x\in X$ and $\alpha_1=\omega\alpha_2$.

\medskip

\noindent {\it Proof of Theorem \ref{complthm}.} 
Let ${\rm id}\colon X\to X_{\rm red}$ be the identity map. By 
Theorem 1.23 in \cite{ALR} and
Theorem \ref{Orthonframe}, there is a smooth (real analytic)
diffeomorphism $f\colon X_{\rm red}\to {\rm OFr}(X_{\rm red})
/{\rm O}(n)$.  Let $\pi\colon {\rm OFr}(X_{\rm red})\to
{\rm OFr}(X_{\rm red}) /{\rm O}(n)$ denote the natural projection.
Let $\alpha$ be a smooth (real analytic)
Riemannian metric on $X$, and let $\alpha_{\rm red}$ be the
corresponding Riemannian metric on $X_{\rm red}$. The
diffeomorphism $f$ induces a smooth (real analytic)
Riemannian metric  $f^\ast\alpha_{\rm red}$ on ${\rm OFr}(X_{\rm red})
/{\rm O}(n)$. By Proposition \ref{seuraus}, there is an
${\rm O}(n)$-invariant smooth (real analytic) Riemannian metric
$\beta$ on ${\rm OFr}(X_{\rm red})$ such that the
Riemannian metric $\hat{\beta}$ induced on ${\rm OFr}(X_{\rm red})
/{\rm O}(n)$ by $\beta$ equals $f^\ast\alpha_{\rm red}$. By Theorems 3.1 and
5.2 in \cite{Ka}, there is an ${\rm O}(n)$-invariant smooth
(real analytic) map $\omega\colon {\rm OFr}(X_{\rm red})\to
{\mathbb{R}}$ such that the Riemannian metric $\omega^2\beta$
on ${\rm OFr}(X_{\rm red})$ is complete.
Let $\bar{\omega}\colon {\rm OFr}(X_{\rm red})/{\rm O}(n)\to
{\mathbb{R}}$ denote the map induced by $\omega$.
Then $(\bar{\omega}^2\circ f\circ {\rm id})\alpha$ is a complete
smooth (real analytic) Riemannian metric on $X$ conformal to
$\alpha$. \qed

\medskip

A Riemannian metric  $\alpha$ on a connected orbifold
$X$ is called {\it bounded} if 
$X$ is bounded with respect to the metric induced by $\alpha$.
The following result concerning bounded Riemannian metrics was
originally proved by Nomizu and Ozeki in the manifold setting
(\cite{NO}, Theorem 2).

\begin{theorem}
\label{boundthm}
Let $X$ be a connected
smooth (real analytic)  orbifold and let $\alpha$ be a smooth
(real analytic) Riemannian metric on $X$. 
Then there is a bounded smooth (real analytic) Riemannian
metric on $X$ which is  conformal to $\alpha$.
\end{theorem}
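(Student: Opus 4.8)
The plan is to follow verbatim the template used for Theorem \ref{complthm}, replacing ``complete'' by ``bounded'' throughout and invoking the equivariant version of Nomizu--Ozeki's Theorem~2 in place of its complete counterpart. First I would reduce to the reduced orbifold. By Proposition \ref{one-to-one} the given metric $\alpha$ on $X$ corresponds to a smooth (real analytic) metric $\alpha_{\rm red}$ on $X_{\rm red}$, and by Remark \ref{huomautus} the induced distances satisfy $d_\alpha = d_{\alpha_{\rm red}}$, so $\alpha$ is bounded if and only if $\alpha_{\rm red}$ is. Since $X$ is connected and coincides with $X_{\rm red}$ as a topological space, $X_{\rm red}$ is connected as well. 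Hence it suffices to produce a bounded metric, conformal to $\alpha_{\rm red}$, on $X_{\rm red}$.

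Next I would transport the problem to the orthonormal frame bundle. Using the diffeomorphism $f\colon X_{\rm red}\to {\rm OFr}(X_{\rm red})/{\rm O}(n)$ of Theorem \ref{Orthonframe}, push $\alpha_{\rm red}$ to the metric $f^\ast\alpha_{\rm red}$ on the quotient, and then apply Proposition \ref{seuraus} to realize it as $\hat\beta$ for some ${\rm O}(n)$-invariant metric $\beta$ on $M:={\rm OFr}(X_{\rm red})$. Here ${\rm O}(n)$ acts properly, almost freely and real analytically, so all of Section \ref{quotmetr} applies; in particular Theorem \ref{samametriikka} gives $\widetilde{d}_{\,\beta}=d_{\hat\beta}$, and the same for any conformally rescaled invariant metric. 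Since $M/{\rm O}(n)$ is connected, boundedness of a quotient metric $d_{\widehat{\omega^2\beta}}$ is, via this identity, exactly boundedness of $M$ modulo ${\rm O}(n)$ in the metric $\omega^2\beta$ (the bounded analogue of Corollary \ref{korollaari}).

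Now I would apply the ${\rm O}(n)$-invariant bounded-metric theorem from \cite{Ka}, the equivariant analogue of Nomizu--Ozeki's Theorem~2 and companion to the complete results (\cite{Ka}, Theorems 3.1 and 5.2) used in the proof of Theorem \ref{complthm}: there is an ${\rm O}(n)$-invariant smooth (real analytic) function $\omega\colon M\to{\mathbb{R}}$, with $\omega>0$, such that $\omega^2\beta$ makes $M/{\rm O}(n)$ bounded. Letting $\bar\omega$ denote the induced map on $M/{\rm O}(n)$, the metric $\widehat{\omega^2\beta}=\bar\omega^2\hat\beta$ is then bounded. Pulling back along $f$ and the identity ${\rm id}\colon X\to X_{\rm red}$ exactly as in the proof of Theorem \ref{complthm}, the metric $(\bar\omega^2\circ f\circ{\rm id})\,\alpha$ is a smooth (real analytic) Riemannian metric on $X$ conformal to $\alpha$; by the boundedness correspondence of the preceding paragraph together with $d_\alpha=d_{\alpha_{\rm red}}$ it is bounded, which finishes the argument.

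The main obstacle is the bookkeeping of boundedness through the reduction, namely checking that ``$X$ is bounded'' matches ``$M/{\rm O}(n)$ is bounded'', which in turn matches the boundedness produced upstairs; this rests entirely on the identity $\widetilde{d}_{\,\alpha}=d_{\hat\alpha}$ of Theorem \ref{samametriikka} and on $d_\alpha=d_{\alpha_{\rm red}}$ of Remark \ref{huomautus}. The genuinely nontrivial input is the real analytic equivariant bounded theorem, since in the real analytic category one cannot adapt Nomizu--Ozeki's partition-of-unity construction directly on the orbifold; routing through ${\rm OFr}(X_{\rm red})$ is precisely what makes the invariant construction of \cite{Ka} available, and I would expect this to be the one step that must be quoted rather than reproved here.
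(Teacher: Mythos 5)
Your reduction skeleton (pass to $X_{\rm red}$ via Proposition \ref{one-to-one} and Remark \ref{huomautus}, then to ${\rm OFr}(X_{\rm red})$ via Theorem \ref{Orthonframe} and Proposition \ref{seuraus}, produce an ${\rm O}(n)$-invariant conformal factor upstairs, push it down) matches the paper's. The genuine gap is precisely the step you propose to ``quote rather than reprove'': an equivariant \emph{bounded} analogue of Nomizu--Ozeki's Theorem 2 in \cite{Ka}. No such theorem is available there. The results of \cite{Ka} that this paper uses are the equivariant \emph{complete} theorems (Theorems 3.1 and 5.2) and two invariant approximation lemmas (Lemmas 2.3 and 5.1); the whole content of Theorem \ref{boundthm} is that the equivariant bounded statement must be \emph{proved}, and your proposal defers its only nontrivial step to a reference that does not contain it.

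What the paper does instead, and what your outline is missing: it first invokes Theorem \ref{complthm} to assume $\alpha$ is complete, hence (via Remark \ref{huomautus}, the diffeomorphism $f$, and Corollary \ref{korollaari}) that $\beta$ is complete. This is not a cosmetic normalization: Nomizu--Ozeki's boundedness argument requires minimal geodesics from a base point (Hopf--Rinow), so it cannot start from an arbitrary metric, and your proposal never makes this reduction. Then, on a connected component ${\rm OFr}(X_{\rm red})_0$ with $H\subseteq {\rm O}(n)$ the subgroup preserving it, one forms the continuous $H$-invariant function $r_0(x)=\max\{ d_{\beta_0}(hx_0,x)\mid h\in H\}$ (finite and continuous since $H$ is compact, and dominating $d_{\beta_0}(x_0,\cdot)$), dominates it by an $H$-invariant smooth (real analytic) function $r>r_0$ using Lemmas 2.3 and 5.1 of \cite{Ka} --- this is exactly where the real analytic invariant technology enters --- and checks, by the proof of Theorem 2 in \cite{NO}, that $e^{-2r}\beta_0$ is bounded; pushing $e^{-2\bar r}$ down then finishes the proof as you describe. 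So your argument can be repaired, but only by inserting the completeness reduction and carrying out this equivariant construction; as written, the central claim rests on a citation that does not exist.
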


\begin{proof}  We use the same notation as in the proof of Theorem
\ref{complthm}.  By Theorem \ref{complthm}, we may assume that
$\alpha$ is complete.
Let $x_0$  be an arbitrary
point in ${\rm OFr}(X_{\rm red})$ and let
${\rm OFr}(X_{\rm red})_0$ denote the connected component of
${\rm OFr}(X_{\rm red})$ containing $x_0$.
Let $H=\{ h\in {\rm O}(n)\mid h({\rm OFr}(X_{\rm red})_0)
={\rm OFr}(X_{\rm red})_0\}$. (In fact,
$H={\rm O}(n)$, or $H={\rm SO}(n)$.)
Let $\beta$ be the
${\rm O}(n)$-invariant smooth (real analytic) Riemannian metric
on ${\rm OFr}(X_{\rm red})$ such that the
Riemannian metric $\hat{\beta}$ induced on ${\rm OFr}(X_{\rm red})
/{\rm O}(n)$ by $\beta$ equals $f^\ast\alpha_{\rm red}$, and
let $\beta_0$ denote the restriction of $\beta$ to
${\rm OFr}(X_{\rm red})_0$.
Let $d_{\beta_0}$ denote the $H$-invariant
metric $\beta_0$ induces on
${\rm OFr}(X_{\rm red})_0$. 
Let
$$
r_0\colon {\rm OFr}(X_{\rm red})_0\to {\mathbb{R}},\,\,\, x
\mapsto \max \{ d_{\beta_0}(hx_0,x)\mid h\in H\}.
$$
Then $r_0$ is a continuous $H$-invariant map and
$r_0(x)\geq d_{\beta_0}(x_0,x)$, for all $x\in {\rm OFr}(X_{\rm red})_0$. 
By Lemmas
2.3 and 5.1 in \cite{Ka}, 
there is an $H$-invariant smooth
(real analytic) map $r\colon {\rm OFr}(X_{\rm red})_0\to
{\mathbb{R}}$ such that $r(x)>r_0(x)$, for all
$x\in {\rm OFr}(X_{\rm red})_0$. 
The Riemannian metric $e^{-2r}\beta_0$ on ${\rm OFr}(X_{\rm red})_0$
is $H$-invariant and, 
by the proof of Theorem 2 in
\cite{NO}, it is bounded. 
Let $\bar{r}\colon {\rm OFr}(X_{\rm red})/{\rm O}(n)
\cong {\rm OFr}(X_{\rm red})_0/H\to
{\mathbb{R}}$ denote the map induced by $r$. Then
$e^{-2\bar{r}\circ f\circ{\rm id}}\alpha$ is a bounded
smooth (real analytic) Riemannian metric on $X$
and it is conformal to $\alpha$.
\end{proof}

Assume every Riemannian metric on $X$ is complete.  According
to Theorem \ref{boundthm}, $X$ has a bounded
complete Riemannian metric. Thus it
follows   that $X$ must be compact.

\end {document}